\newcommand{\ignore}[1]{}
\newcommand{\abs}[1]{\left\lvert {#1} \right\rvert}
\newcommand{\sabs}[1]{\lvert {#1} \rvert}
\newcommand{\snorm}[1]{\lVert {#1} \rVert}
\newcommand{\Z}{{\mathbb{Z}}}
\newcommand{\N}{{\mathbb{N}}}
\newcommand{\sF}{{\mathcal{F}}}
\newcommand{\sH}{{\mathcal{H}}}
\newtheorem{thm}{Theorem}[section]
\newtheorem{prop}[thm]{Proposition}
\newtheorem{cor}[thm]{Corollary}
\newtheorem{lemma}[thm]{Lemma}
\theoremstyle{definition}
\newtheorem{defn}[thm]{Definition}
\theoremstyle{remark}
\newtheorem{remark}[thm]{Remark}
\author{Ji\v{r}\'i Lebl}
\thanks{The first author was in part supported by NSF grant DMS 0900885.}
\address{Department of Mathematics, University of California
at San Diego, La Jolla, CA 92093-0112, USA}
\email{jlebl@math.ucsd.edu}
\author{Han Peters}
\thanks{The second author was supported by a SP3-People Marie Curie Actionsgrant in the project Complex Dynamics (FP7-PEOPLE-2009-RG, 248443)}
\address{Korteweg De Vries Institute for Mathematics, University of Amsterdam, Science Park 904,
1098 XH Amsterdam,
The Netherlands,}
\email{h.peters@uva.nl}
\dedicatory{Dedicated to Professor John P.\ D'Angelo on the occasion of his
sixtieth birthday.}
\date{September 26, 2011}
\title[Polynomials constant on a hyperplane and maps of spheres]%
{Polynomials constant on a hyperplane and CR maps of spheres}
\begin{document}


\begin{abstract}
We prove a sharp degree bound for polynomials constant on a hyperplane with a
fixed number of nonnegative distinct monomials. This bound was conjectured
by John P.\ D'Angelo, proved in two dimensions by D'Angelo, Kos and Riehl and
in three dimensions by the authors. The current work builds upon these
results to settle the conjecture in all dimensions. We also give a complete
description of all polynomials in dimensions $4$ and higher for which the
sharp bound is obtained.
The results prove the sharp degree bounds for monomial CR mappings of
spheres in all dimensions.
\end{abstract}

\keywords{Polynomials constant on a hyperplane, CR mappings of spheres,
monomial mappings, degree estimates, Newton diagram}
\subjclass[2000]{14P99, 05A20, 32H35, 11C08}

\maketitle



\section{Introduction} \label{section:intro}

We are interested in the degree and the number of monomials for polynomials
with nonnegative coefficients that are constant on the hypersurface
$$
x_1 + \cdots + x_n = 1.
$$
We denote by $N(p)$ the number of nonzero coefficients of a polynomial $p$.
Our main result is the following.

\begin{thm} \label{thm:monomialdegbnd}
Let $n \geq 2$ and let $p$ be a polynomial with non-negative coefficients of
degree $d$ in $n$ dimensions such that $p(x) = 1$ whenever
$x_1+\cdots + x_n = 1$. Then the following inequality holds and is
sharp
\begin{equation} \label{eqn:degbnd}
d \leq
\begin{cases}
2N(p)-3 & \text{if $n =2$} ,
\\
\frac{N(p)-1}{n-1} & \text{if $n \geq 3$} .
\end{cases}
\end{equation}
\end{thm}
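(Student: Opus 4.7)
The plan is to prove the bound for $n \geq 4$ by induction on $n$, with the $n = 3$ case from the authors' earlier work as the base. An important feature is that the inequality becomes strictly tighter as $n$ grows, so the $(n-1)$-dimensional bound applied to slices of $p$ cannot by itself recover the $n$-dimensional bound; we must also extract structural information from $p$ in addition to counting monomials on slices.

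The basic setup is as follows. For each coordinate $i$, let $q_i(x) := p(x)|_{x_i = 0}$, an $(n-1)$-variable polynomial with nonnegative coefficients that is constant on the hyperplane $\sum_{j \neq i} x_j = 1$. By the inductive hypothesis, $(n-2)\deg(q_i) + 1 \leq N(q_i)$ for each $i$. Since a monomial $x^\alpha$ contributes to $q_i$ precisely when $\alpha_i = 0$, writing $c(\alpha) := \#\{i : \alpha_i = 0\}$ we obtain
\[
(n-2)\sum_{i=1}^n \deg q_i \;+\; n \;\leq\; \sum_{i=1}^n N(q_i) \;=\; \sum_{\alpha \in \supp p} c(\alpha).
\]

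The goal is then to upgrade this to the desired $d(n-1) + 1 \leq N(p)$. The gap between the two inequalities corresponds to two ``deficits'': the monomial deficit $\sum_\alpha (n-1-c(\alpha))$, which measures how many monomials of $p$ have several positive coordinates, and the degree deficit $T := nd - \sum_i \deg q_i$, which measures by how much the slices lose degree. The expected main obstacle is to prove that these two deficits are quantitatively linked: whenever $p$ loses degree under slicing (because all its top-degree monomials lie strictly inside the positive orthant), $p$ must contain correspondingly many interior monomials. The principal tool is the factorization $p - 1 = (x_1 + \cdots + x_n - 1)\,g$: writing $L := x_1 + \cdots + x_n$, the homogeneous parts of $p$ satisfy $p_k = L\,g_{k-1} - g_k$, and the nonnegativity of each $p_k$, combined with an analysis of how supports propagate under multiplication by $L$, should produce the necessary lower bound on the number of interior monomials.

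Finally, the classification of equality cases is obtained by tracing which inequalities are tight throughout the argument. Tightness forces each slice $q_i$ to be an extremal $(n-1)$-dimensional example (classified inductively), and it forces the support of $p$ to concentrate on specific faces of the positive orthant in a rigid manner, yielding the explicit description of sharp polynomials in dimensions four and higher promised in the introduction.
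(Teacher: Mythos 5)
Your framework differs from the paper's in a fundamental way. The paper does not slice $p$ at coordinate hyperplanes. Instead it works with the Newton diagram $D$ of the quotient $q=(p-1)/(s-1)$ (a sign pattern $\{P,N,0\}$ on $\Z^n$), defines \emph{sinks} and \emph{sources} (nodes) of $D$ that inject into the support of $p-1$, and replaces your slicing by a degree-preserving diagonal projection $V(D,k,m)$ (a ``view'' along the edge from $x_k$ to $x_m$). Because the view keeps the size $d$ fixed, the inductive step (Lemma~\ref{indsteplemma}) reduces to showing $\#\bigl(V(D,k,m)\bigr)\leq \#(D)-d$ for a well-chosen edge, which is established by counting corner/edge/facial nodes on the vertical faces via a ``fill'' argument (Lemma~\ref{dim2countlemma}) together with the no-overhang/no-outside-vertical-edge structure forced by having a unique source at the origin. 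Your approach sacrifices the degree during the inductive reduction, and that is exactly what you then have to pay back.

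That repayment is the genuine gap. Your accounting reduces the theorem to a single new inequality: with $c(\alpha)$ the number of zero coordinates of $\alpha$, $M:=\sum_{\alpha\in\supp p}\bigl((n-1)-c(\alpha)\bigr)$, and $T:=\sum_i(d-\deg q_i)$, one needs
\begin{equation*}
M \;\geq\; d + (n-2)\,T - 1 ,
\end{equation*}
and this is where all the content lies. You state only that the factorization $p-1=(s-1)g$ together with the nonnegativity of each $p_k = s\,g_{k-1}-g_k$ ``should produce the necessary lower bound,'' but you give no mechanism. The difficulty is real: the inequality is tight on sharp generalized Whitney polynomials (e.g.\ for $n=4,d=3$ one gets $M=6=3+2\cdot 2-1$), so it has no slack and cannot follow from a soft argument; in particular, it ties the top-degree structure of $p$ (which governs $T$) to the full support of $p$ (which governs $M$) in a global way, which is exactly the kind of rigidity that the paper extracts only after a multi-step combinatorial analysis of $D$. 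As written, your proposal identifies the correct bookkeeping but leaves the theorem unproved precisely at the step where the paper's Lemmas~\ref{dim2countlemma} and~\ref{indsteplemma} do the work. The same remark applies to the equality classification: ``tracing tightness'' through slices does not obviously reconstruct the statement that the diagram has exactly one point per degree, which is what the paper proves in Theorem~\ref{thm:diagwhitney}.
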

For $n \ge 3$ sharp means that for every $n$ and every $d$ there exist examples for which equality holds. When $n = 2$ equality can only hold for odd degree, for even degree there exist examples so that the right hand side is exactly one larger.

Inequality \eqref{eqn:degbnd} was conjectured by D'Angelo~\cite{DKR}. The $2$-dimensional case was
proved by D'Angelo, Kos and Riehl in \cite{DKR}. The $3$-dimensional case
was proved by the authors in \cite{LP}. In this article we use ideas of
the $2$-dimensional proof for an induction argument that uses the $3$-dimensional result as a base to obtain the result in
dimensions $4$ and higher.

In dimensions $4$ and higher we also give a complete description of all
polynomials for which inequality \eqref{eqn:degbnd} is an equality. We note
that this description does not hold in dimensions $2$ and~$3$.

It is instructive to note that no degree bound can hold when $n=1$; the
polynomial $p(x) = x^d$ gives 1 whenever $x=1$, but $N(p) = 1$ while $d$ can
be arbitrarily large.

We end the introduction with a brief description of the history of the
problem. The motivation of the problem comes from the study of proper
holomorphic maps between balls in different dimensions. We denote by
$\mathbb{B}^n$ the unit ball in $n$ complex dimensions. Faran
\cite{Faran:B2B3} showed that a proper holomorphic map from $\mathbb{B}^2$
to $\mathbb{B}^3$ that is sufficiently smooth on the boundary is spherically
equivalent to a monomial map of degree at most $3$. It was shown by
Forstneri\v{c} \cite{Forstneric} that any proper holomorphic map
from $\mathbb{B}^n$ to $\mathbb{B}^N$, with $n \geq 2$, is rational if the map is
sufficiently smooth up to the boundary, and for fixed dimensions $n$ and $N$
the degree is bounded from above by a constant depending only on $n$ and $N$.

The bound on the degree in terms of $n$ and $N$ has been studied by various
authors.  As mentioned above, Forstneri\v{c}~\cite{Forstneric}
proved that a bound exists.  The best currently known bound in the
general rational case was proved by
Meylan~\cite{Meylan} in $n=2$ and extended to $n \geq 3$ by
D'Angelo-Lebl~\cite{DL:families}.
D'Angelo has conjectured that the best possible bound for $n = 2$ is
given by $d \leq 2N-3$, and for $n \geq 3$ it is given by $d \leq
\frac{N-1}{n-1}$. The reader will notice that these are exactly the same
inequalities as those in Theorem \ref{thm:monomialdegbnd}.
When $N$ is small compared to $n$, then the conjectured bound is known to hold
in the general rational case.
When $n=2$ and $N=3$ it follows from Faran's work~\cite{Faran:B2B3} that
$d \leq 3$.  When
$n \geq 3$ and $N \leq 3n-4$, then $d \leq 2$ by the work of
Huang and Ji~\cite{HJ01} and Huang, Ji, and Xu~\cite{HJX:gap}.

Suppose that the proper holomorphic map $f \colon \mathbb{B}^n \to \mathbb{B}^N$ is a monomial map, that is, every coordinate function is a monomial. Of course in this case the map automatically extends to the boundary, and the properness of the map means exactly that
\begin{equation}
\snorm{f(z_1, \ldots, z_n)}^2 = 1 \; \; \; \text{when} \; \; \; \sabs{z_1}^2
+ \cdots \sabs{z_n}^2 = 1,
\end{equation}
which we can write as
\begin{equation}
\sabs{f_1(z)}^2 + \cdots + \sabs{f_N(z)}^2 = 1 \; \; \; \text{when} \; \; \;
\sabs{z_1}^2 + \cdots \sabs{z_n}^2 = 1.
\end{equation}
Since every coordinate function $f_j$ is a monomial, we get that
$\sabs{f_1(z)}^2 + \cdots + \sabs{f_N(z)}^2$ is a real polynomial $p$ in the
variables $x_1, \ldots x_n$, where $x_j = \sabs{z_j}^2$. The polynomial $p$ has
non-negative coefficients and (at most) $N$ distinct nonzero coefficients.
Theorem \ref{thm:monomialdegbnd} therefore implies the following result.

\begin{cor}
Let $n, N \in \mathbb{N}$, with $n \geq 2$ and suppose that $f \colon
\mathbb{B}^n \to \mathbb{B}^N$ is a monomial map that is proper and of degree $d$. Then the following inequality holds and is sharp
\begin{equation}
d \leq
\begin{cases}
2N-3 & \text{if $n =2$},
\\
\frac{N-1}{n-1} & \text{if $n \geq 3$} .
\end{cases}
\end{equation}
\end{cor}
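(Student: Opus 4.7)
My plan is a direct reduction to Theorem~\ref{thm:monomialdegbnd}, using the squared-norm construction already outlined in the paragraph preceding the corollary.

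First I would write a proper monomial map as $f = (f_1, \ldots, f_N)$ with $f_j(z) = c_j z^{\alpha_j}$. Setting $x_i := |z_i|^2$, I form
\begin{equation*}
p(x_1, \ldots, x_n) := |f_1(z)|^2 + \cdots + |f_N(z)|^2 = \sum_{j=1}^{N} |c_j|^2\, x^{\alpha_j},
\end{equation*}
which is a real polynomial with non-negative coefficients. It has at most $N$ distinct nonzero monomials, so $N(p) \leq N$, and the properness condition $\|f(z)\|^2 = 1$ on $\|z\|^2 = 1$ translates exactly to $p(x) = 1$ on $x_1 + \cdots + x_n = 1$. Moreover, $\deg_z f = \deg_x p$. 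Applying Theorem~\ref{thm:monomialdegbnd} to $p$ and using the monotonicity of both bounds in the number of monomials (that is, $2N(p)-3 \leq 2N-3$ and $\tfrac{N(p)-1}{n-1} \leq \tfrac{N-1}{n-1}$) yields the inequality in the corollary.

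For sharpness I would invert the construction. Given a polynomial $p$ of degree $d$ realizing equality in Theorem~\ref{thm:monomialdegbnd} with $N(p) = N$, write $p = \sum a_\alpha x^\alpha$ with $a_\alpha > 0$, and define $f_\alpha(z) := \sqrt{a_\alpha}\, z^\alpha$ for each monomial of $p$. Then $\|f\|^2 = p$, so $f \colon \mathbb{B}^n \to \mathbb{B}^N$ is a proper monomial map of degree exactly $d$, producing the extremal example.

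The reduction itself is routine bookkeeping; no step poses a real obstacle. The only genuine difficulty lies entirely inside Theorem~\ref{thm:monomialdegbnd} --- both proving the inequality and exhibiting polynomials $p$ where it is sharp. Consequently, the corollary contributes no new technical content beyond what is already required for the main theorem.
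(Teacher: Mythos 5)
Your reduction is exactly the one the paper carries out in the paragraph preceding the corollary: set $x_j = |z_j|^2$, form $p = \|f\|^2$, and invoke Theorem~\ref{thm:monomialdegbnd} together with $N(p) \leq N$. Your added remark on inverting the construction via $f_\alpha = \sqrt{a_\alpha}\,z^\alpha$ to get sharp examples is the standard way to see the sharpness claim and is implicit in the paper as well, so the approaches coincide.
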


We note again that this result was proved by D'Angelo, Kos and Riehl in
\cite{DKR} for $n= 2$ and by the authors in \cite{LP} for $n=3$.

While monomial mappings may seem like a special case, we would like to
note that all known examples of proper rational maps between balls are
homotopic to a monomial map.  Furthermore, the first author proved
in~\cite{Lebl:deg2} that all degree two proper maps between balls are equivalent
to monomial maps.

{\emph{Acknowledgement: The authors would like to thank John D'Angelo for
introducing this problem to us in 2005, and for his help and guidance since
then.  We hope that this resolution of his conjecture in the monomial case
is a fitting birthday present.
Some of the work leading up to these results has happened during
workshops at MSRI and AIM, and the authors would like to thank both
institutes.  The authors would also like to thank the referee for useful
suggestions on improving the exposition.}}


\section{Whitney polynomials} \label{section:whitney}

Denote by $\sH$ the set of polynomials $p$ with non-negative coefficients
that satisfy $p=1$ on the hyperplane $x_1 + \cdots + x_n = 1$.
We start by giving a method of constructing examples in $\sH$.
From now on we write $s = x_1 + \cdots + x_n$.  Of course $s$ itself
lies in $\sH$.  Let $p \in \sH$ be a polynomial of degree $d$, and suppose
that $m$ is a monomial with a positive coefficient such that $p - m$ still
has only non-negative coefficients. Then we can replace $m$ by $s$ times
$m$, in other words,
\begin{equation}
p - m + s \cdot m \in \sH.
\end{equation}

Note that if $m$ has degree $d$, and the coefficient of $m$ is as large as
possible, then $p - m + sm$ has degree $d+1$, and has exactly $n-1$ nonzero
coefficients more than $p$. So starting with $s$, we can repeatedly replace
one of the terms of maximal degree with $s$ times that term, to obtain many
examples of any degree for which $N = n + (d-1)(n-1)$, or $N = d (n-1) + 1$.
For example when $n=3$ (calling the variables $x$, $y$, and $z$), we could
obtain a degree 3 polynomial with 7 monomials
by the following
procedure:
\begin{equation}
\begin{aligned}
& x + y + z (x + y + z) =
x + y + x z + y z + z^2 , \\
& x + y + x z (x + y + z) + y z + z^2
=
x + y + x^2 z  + x y z + x z^2 + y z + z^2 .
\end{aligned}
\end{equation}
We call polynomials obtained by following this procedure
\emph{sharp generalized Whitney polynomials}, and
note that
\begin{equation}\label{eqn:equality}
d = \frac{N - 1}{n - 1}.
\end{equation}

As we will see, this is the best possible bound for $n \geq 3$, and in fact,
for $n \geq 4$ these are the only polynomials for which there can be equality.

\begin{thm}\label{thm:equality}
Let $n \geq 4$ and let $p \in \sH$. Then \eqref{eqn:equality} holds if and only if $p$ is a sharp generalized Whitney polynomial.
\end{thm}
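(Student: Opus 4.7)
The plan is to induct on $N$ (equivalently on $d$, via $d = (N-1)/(n-1)$). The base case $d = 1$, $N = n$ is immediate: a linear $p \in \sH$ with $p(e_j) = 1$ for each standard basis vector $e_j$ must be $p = s$, the trivial sharp generalized Whitney polynomial. For the inductive step I aim to prove the following structural \emph{main claim}: if $p \in \sH$ achieves equality with $d \geq 2$, then there exist a degree-$(d-1)$ monomial $m$ and a real $c > 0$ with $p_d = c \cdot s \cdot m$ and $m \notin \supp(p)$. Granted this, setting $p' = p - c \cdot sm + c \cdot m$ gives $p' \in \sH$ with $\deg p' = d-1$ and $N(p') = N - (n-1)$, so $d - 1 = (N(p') - 1)/(n-1)$; by induction $p'$ is sharp generalized Whitney, and undoing the Whitney move on $m$ with coefficient $c$ shows that $p$ is sharp generalized Whitney as well.

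The main claim naturally splits into two pieces. Writing $p - 1 = (s-1)\, q$ for a real polynomial $q$ and comparing top-degree parts yields $p_d = s \cdot q_{d-1}$ for a real homogeneous $q_{d-1}$ of degree $d-1$ (not assumed to have nonnegative coefficients). I would then prove an algebraic \emph{rigidity lemma}: if $q \ne 0$ is any real polynomial and $sq$ has only nonnegative coefficients, then $sq$ has at least $n$ distinct nonzero monomials, with equality if and only if $q$ is a positive real multiple of a single monomial. The argument picks a generic linear functional $\ell$ whose maximum on $\supp(q)$ is attained uniquely at some exponent $\beta_*$; the $n$ shifted exponents $\beta_* + e_j$ then sit on a vertex of the Newton polytope of $sq$ and each appears with a common coefficient $c_*$, forcing $c_* > 0$, while a brief case analysis shows that any second exponent of $q$ must contribute at least one further monomial to $sq$ outside the set $\{\beta_* + e_j\}$. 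Provided the top-degree monomial count of $p$ satisfies $N_d \leq n$, the lemma delivers $p_d = c \cdot s\, m$.

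Establishing $N_d \leq n$ is therefore the main obstacle, and is precisely where the hypothesis $n \geq 4$ is essential. My approach is a slicing argument: for each variable $x_j$ the restriction $p^{(j)} = p|_{x_j = 0}$ lies in $\sH$ in $n-1$ variables, and because $n - 1 \geq 3$, Theorem~\ref{thm:monomialdegbnd} applies on each slice to give $\deg p^{(j)} \leq (N(p^{(j)}) - 1)/(n-2)$. Combining these bounds over all $j$ with the counting identity $\sum_j N(p^{(j)}) = nN - \sum_{\alpha \in \supp(p)} |\supp(\alpha)|$ and a careful accounting of how the top-degree monomials of $p$ distribute across the slices should squeeze out $N_d \leq n$; the corresponding squeeze has slack in dimensions $n \leq 3$, which is what permits the known exotic non-Whitney equality examples in those dimensions. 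Finally, once $p_d = c \cdot s\, m$ is in hand, the absence of $m$ from $\supp(p)$ follows from a short contradiction: if $m$ were present in $p$ with some positive coefficient then $p'' = p - c \cdot sm + c \cdot m$ would lie in $\sH$ with $\deg p'' = d - 1$ but $N(p'') = N - n$, forcing $d - 1 \leq (N - n - 1)/(n-1) = d - 1 - 1/(n-1)$ via Theorem~\ref{thm:monomialdegbnd}, which is impossible. This closes the induction.
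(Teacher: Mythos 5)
Your approach is genuinely different from the paper's. The paper works combinatorially: its Theorem~\ref{thm:diagwhitney} shows, via the view-and-face counting machinery of Lemma~\ref{indsteplemma}, that a Newton diagram of $q=(p-1)/(s-1)$ with a unique source at the origin which achieves equality must have exactly one nonzero point in each degree, and the sharp generalized Whitney characterization of $p$ falls out immediately. You instead propose a direct algebraic induction on the degree, peeling off the top layer $p_d = c\, s\, m$. The outer inductive skeleton is sound and, if it worked, would be more elementary than the diagram argument. However, the two lemmas on which it rests are not established, and one of them is false as stated.

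The rigidity lemma is false. Take $n=2$ and $q = x^2 - xy + y^2$: then $sq = x^3 + y^3$ has exactly $n=2$ distinct nonzero monomials, all with nonnegative coefficients, but $q$ is not a positive multiple of a single monomial. This is nothing exotic; it is precisely the top-degree part of the Faran polynomial $x^3+3xy+y^3$. The proof sketch you give also fails on its own terms, because it ignores cancellation: for a generic $\ell$ uniquely maximized on $\supp(q)$ at $\beta_*$, only \emph{one} of the shifts $\beta_* + e_j$ (the one maximizing $\ell(e_j)$) is a vertex of the Newton polytope of $sq$, and the remaining $\beta_* + e_j$ receive contributions from other exponents of $q$, so their coefficients need not equal $c_*$ and may vanish. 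In the example above with $\ell(a,b)=2a+b$ and $\beta_*=(2,0)$, the shift $(2,1)$ has coefficient $0$, not $c_*=1$. Nothing in your sketch invokes $n\geq 4$, so even if some restricted version of the lemma is true in higher dimensions, a different mechanism is needed to prove it. Separately, the crucial bound $N_d \leq n$ is left as a heuristic (``a careful accounting\ldots should squeeze out''), and this step really does hide all the $n\geq 4$ content: in the three-dimensional example \eqref{ex:Faran3D} the top degree has five monomials with $n=3$, so the slicing inequality must fail for $n\leq 3$, which means the tightness you need is delicate and cannot simply be waved through. Until both the rigidity lemma and $N_d\leq n$ are proven, the proposal does not constitute a proof.
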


The following example shows that Theorem \ref{thm:equality} does not hold for $n = 3$.
\begin{equation}\label{ex:Faran3D}
p(x,y,z) = x^3 + 3x^2 z + 3xz^2 + z^3 + 3xy + 3yz + y^3.
\end{equation}
This polynomial can be obtained from the polynomial $F = x^3 + 3xy + y^3$ by
substituting $x+ z$ for $x$. Since $F = 1$ when $x+ y = 1$ one immediately
sees that $p = 1$ when $x + y + z = 1$. The polynomial $p$ has degree $3$,
has $7$ nonzero coefficients and $3$ variables, and $\frac{7-1}{3-1} = 3$.
It is clear that $p$ is not a sharp generalized Whitney polynomial, as the highest degree terms are not of the form $s \cdot m$.

In dimension $2$ the equality $d = N-1$ is not optimal, in fact $N$ can be
decreased by roughly a factor $2$. The map $F = x^3 + 3xy + y^3$ is an
example where $d = 2N - 3$.  It was shown by D'Angelo
\cite{D88} (see also \cite{Dbook}) that such examples occur for any odd degree.

The sharp polynomials in $n=2$ constructed by D'Angelo happen to be group
invariant, see \cite{Dbook}.  Sharp polynomials that are group invariant
also occur in $n=3$, for example \eqref{ex:Faran3D} above.
However, as a consequence of Theorem~\ref{thm:equality}
no sharp polynomials are group invariant when $n \geq 4$, as sharp
generalized Whitney polynomials are not group invariant.


\section{Newton diagrams}

As before let $s = x_1 + \cdots + x_n$ and let $p \in \sH$.  As $p-1$ is divisible by $s - 1$ we
can define the polynomial
\begin{equation}
q = \frac{p-1}{s-1}.
\end{equation}
In general we work with $q$ rather than with $p$. While the polynomial $q$ can have negative coefficients, the fact that the coefficients of $p$ are all non-negative puts serious restrictions on $q$, which we discuss in this section.

\begin{defn}
For $\alpha \in \Z^n$, we write
$\abs{\alpha} = \alpha_1 + \alpha_2 +
\cdots + \alpha_n$. Let $p-1 = (s-1)q$ as above, where $p$ is of degree $d$.
We define the function $D \colon \Z^n \to \{ 0, P, N \}$ as follows.
Write $q$ in multi-index notation
\begin{equation}
q(x) = \sum_{\alpha} c_\alpha x^\alpha .
\end{equation}
If $c_\alpha$ does not appear in the expansion we let $c_\alpha = 0$.
\begin{equation}
D(\alpha) :=
\begin{cases}
P & \text{ if $c_{\alpha} > 0$, } \\
0 & \text{ if $c_{\alpha} = 0$, } \\
N & \text{ if $c_{\alpha} < 0$. }
\end{cases}
\end{equation}
We call $D$ the \emph{Newton diagram} of $q$, and we say that
$D$ is the Newton diagram corresponding to $p$.

We call the $\alpha \in \Z^n$ \emph{points} of $D$, and we call
$\alpha$ a $0$-point if $D(\alpha)=0$, a $P$-point if $D(\alpha) = P$ and
an $N$-point if $D(\alpha) = N$.  We say that the monomial $x^{\alpha}$
is the monomial associated to $\alpha \in \Z^n$ and vice-versa.
We often identify points of $\Z^n$ with the associated monomials.

Let $\widehat{K} \subset \Z^n$ is the smallest
set such that $D^{-1}(\{P,N\}) \subset \widehat{K}$, that is $\widehat{K}$
contains all the points where $D$ is nonzero, and such that for some
$a = (a_1,\ldots,a_n) \in \Z^n$
and some $k \in \N$ we have
\begin{equation}
\widehat{K} = \{x \in \Z^n \mid x_j \geq a_j \text{ for all $j$ and } \abs{x} \leq k\}.
\end{equation}
We define the \emph{size} of $D$ as $k-\abs{a}+1$.
\end{defn}

Therefore, the Newton diagram of $q$ is an $n$-dimensional
array of $P$s, $N$s, and $0$s,
one for each coefficient of $q$.
For convenience we include negative powers in the Newton diagram, even though $D(\alpha) = 0$ any time $\alpha$
is not in the positive quadrant.  We note that if $p-1$ has a non-zero constant term then the size of $D$ is
equal to the degree of $p$.

We generally ignore points $\alpha \in \Z^n$ where $D(\alpha) = 0$.
We give a graphical representation of $D$ by drawing a lattice, and then
drawing the values of $D$ in the lattice as circles or spheres.  For convenience,
when drawing the $n=2$
lattice, we put $(0,0)$ at the origin, and then let $(0,1)$ be directed
at angle $\frac{\pi}{3}$ and $(1,0)$ at angle $\frac{2\pi}{3}$.  Similarly we depict
the diagram for $n=3$.

In the figures, we do not draw the circles and spheres corresponding to the zero
coefficients.
See Figure~\ref{fig:newton} for sample diagrams.

\begin{figure}[h!t]
\begin{center}
\begin{minipage}[b]{1.6in}
\centering
\includegraphics{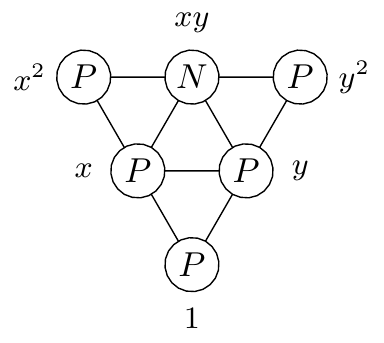}
\end{minipage}
\begin{minipage}[b]{1.6in}
\centering
\includegraphics{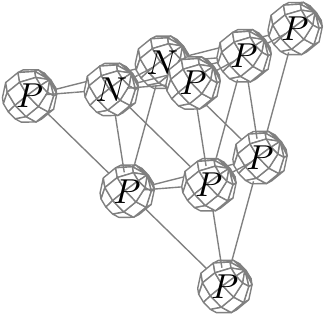}
\end{minipage}
\caption{Diagrams corresponding to the polynomial $p(x,y) = x^3 + 3xy + y^3$ on the left, and the polynomial from \eqref{ex:Faran3D} on the right.}\label{fig:newton}
\end{center}
\end{figure}

We adopt the following terminology from \cite{DKR} and \cite{LP}.

\begin{defn}
For a Newton diagram $D$ and $\alpha \in \Z^n$ we define
\begin{equation}
\operatorname{down}_k(\alpha) =
\operatorname{down}_k(\alpha_1,\ldots,\alpha_n) :=
(\alpha_1,\ldots,\alpha_{k-1},\alpha_k-1,\alpha_{k+1},\ldots,\alpha_n)
\end{equation}
to be the function that subtracts 1 from the $k$th element of $\alpha$.

We call $\alpha \in \Z^n$ a \emph{sink}, if
\begin{equation}
\begin{aligned}
& D(\alpha) = N \text{ or } 0 \\
& \quad \text{and} \\
& D\bigl(\operatorname{down}_k(\alpha)\bigr) =
P \text{ or } 0 \qquad \text{for all $k = 1,\ldots,n$} ,
\end{aligned}
\end{equation}
and at least one of $D(\alpha)$ or $D\bigl(\operatorname{down}_k(\alpha)\bigr)$ is
nonzero.

Similarly we call $\alpha \in \Z^n$ a \emph{source}, if
\begin{equation}
\begin{aligned}
& D(\alpha) = P \text{ or } 0 \\
& \quad \text{and} \\
& D\bigl(\operatorname{down}_k(\alpha)\bigr) =
N \text{ or } 0 \qquad \text{for all $k = 1,\ldots,n$} ,
\end{aligned}
\end{equation}
and at least one of $D(\alpha)$ or $D\bigl(\operatorname{down}_k(\alpha)\bigr)$ is
nonzero.

When we want to say that $\alpha$ is a sink or a source, but we do
not necessarily need to differentiate, we say $\alpha$ is a \emph{node}.
Define
\begin{equation}
\#(D) :=
\text{\# of nodes in $D$}.
\end{equation}
\end{defn}

\begin{figure}[h!t]
\begin{center}
\includegraphics{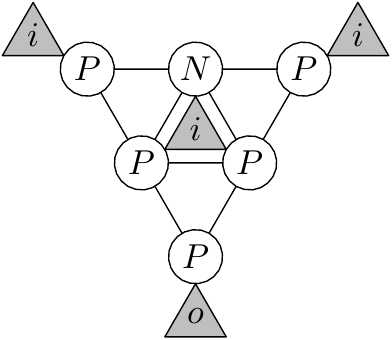}
\caption{The two dimensional diagram corresponding to $p =
x_1^3+3x_1x_2+x_2^3$ with sinks and sources marked respectively $i$ and $o$.}\label{fig:ss}
\end{center}
\end{figure}
See Figure~\ref{fig:ss} for a sample two dimensional diagram with sinks and
sources marked.  In the diagram sinks and sources are marked with a
triangle, with $i$ for sink and $o$ for source.  The vertices of the triangle
mark the points that affect the sink or source.  The top vertex marks the
$\alpha$ and the bottom two vertices of the triangle mark
$\operatorname{down}_1(\alpha)$ and
$\operatorname{down}_2(\alpha)$.

Note that the monomials
$x^{\alpha}$,
$x^{\operatorname{down}_1(\alpha)}$,
\ldots,
$x^{\operatorname{down}_n(\alpha)}$ in $q$ are those monomials that when
multiplied by $(s-1)$ are the ones that contribute to the monomial
$x^{\alpha}$ in $p-1 = q(s-1)$.  Therefore,
if $\alpha$ is a sink, then $x^{\alpha}$ in $p$ must have a positive coefficient.
Similarly, if $\alpha$ is a source then $x^{\alpha}$ in $p$ must have a
negative coefficient.
The following proposition is then immediate.

\begin{prop} \label{prop:ptodiag}
Let $p$ be a polynomial such that $p-1 = (s-1)q$.  Let $D$ be the
corresponding Newton diagram.  If $\alpha \in \Z^n$ is a sink, then
the monomial $x^\alpha$ has a nonzero positive coefficient in the polynomial $p-1$.
If $\alpha \in Z^n$ is a source then the monomial $x^\alpha$ has a nonzero negative
coefficient in $p-1$.  In particular,
\begin{equation}
\#(D) \leq N(p-1) \text{ which implies } \#(D)-1 \leq N(p) .
\end{equation}
Furthermore, if $p \in \sH$ (that is $p$ has all positive coefficients) and $p$ is nonconstant,
then $D$ has a unique source at $\alpha = (0,\ldots,0)$, and otherwise $D$ has only sinks.
\end{prop}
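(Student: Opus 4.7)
My plan is to verify the proposition by writing down how the coefficients of $q$ combine to give the coefficients of $p-1$, and then directly reading off the sign from the sink and source conditions.

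\textbf{Step 1 (Coefficient formula).} Write $q = \sum_\beta c_\beta x^\beta$, with the convention that $c_\beta = 0$ when $\beta$ has a negative entry or is simply absent from $q$. Since $p - 1 = (s-1)q = \sum_{k=1}^n x_k q - q$, multiplying by $x_k$ shifts exponents up by $1$ in the $k$-th slot, so
\begin{equation*}
[x^\alpha](p - 1) \;=\; \sum_{k=1}^n c_{\operatorname{down}_k(\alpha)} \;-\; c_\alpha.
\end{equation*}

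\textbf{Step 2 (Signs at nodes).} If $\alpha$ is a sink, then by definition $c_\alpha \leq 0$ (since $D(\alpha)=N$ or $0$) while $c_{\operatorname{down}_k(\alpha)} \geq 0$ for every $k$. The ``at least one nonzero'' clause in the definition guarantees that at least one of these weak inequalities is strict, and hence $[x^\alpha](p-1) > 0$. Dually, if $\alpha$ is a source then $[x^\alpha](p-1) < 0$. In either case the coefficient of $x^\alpha$ in $p - 1$ is nonzero, and it follows that distinct nodes index distinct nonzero monomials of $p - 1$, so $\#(D) \leq N(p-1)$. Since $p$ and $p-1$ differ only in the constant term, $N(p-1) \leq N(p) + 1$, giving $\#(D) - 1 \leq N(p)$.

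\textbf{Step 3 (Unique source when $p \in \sH$ is nonconstant).} A source $\alpha$ forces $[x^\alpha](p-1) < 0$; but when $p$ has non-negative coefficients the only monomial of $p-1$ that can possibly be negative is the constant term $p(0) - 1$, so the only candidate source is $\alpha = (0,\ldots,0)$. To show $\alpha = 0$ really is a source, it suffices to show $c_0 > 0$, because the down-neighbors of $0$ lie outside the positive orthant and therefore satisfy $c_{\operatorname{down}_k(0)} = 0$. The inequality $c_0 > 0$ is equivalent to $p(0) < 1$, which I will verify as follows: since $p$ is nonconstant with non-negative coefficients, some $c_{\alpha_0} > 0$ with $\alpha_0 \neq 0$, and evaluating at the point $(1/n,\ldots,1/n)$ on the hyperplane gives
\begin{equation*}
1 \;=\; p(\tfrac{1}{n},\ldots,\tfrac{1}{n}) \;\geq\; p(0) + c_{\alpha_0} n^{-|\alpha_0|} \;>\; p(0).
\end{equation*}

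The argument is essentially bookkeeping once the coefficient formula of Step 1 is in hand; the only points that require care are checking that the ``at least one nonzero'' clause in the sink/source definition really does promote the sign in Step 2 from $\geq 0$ to $>0$, and confirming in Step 3 that $\alpha = 0$ is not accidentally excluded from being a source merely because all its down-neighbors are $0$-points.
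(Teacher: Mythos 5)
Your proof is correct and is essentially the argument the paper has in mind: the paper simply observes, just before the statement, that the monomials $x^\alpha, x^{\operatorname{down}_1(\alpha)},\ldots,x^{\operatorname{down}_n(\alpha)}$ of $q$ are exactly the ones contributing to $x^\alpha$ in $(s-1)q$, and then declares the proposition immediate; your Steps 1--2 spell out that observation as the coefficient identity $[x^\alpha](p-1)=\sum_k c_{\operatorname{down}_k(\alpha)}-c_\alpha$ and run the sign bookkeeping, and Step 3 supplies the (also routine) check that the origin is genuinely a source when $p\in\sH$ is nonconstant. One cosmetic point: in Step 3 the symbol $c_{\alpha_0}$ silently switches from denoting a coefficient of $q$ to denoting a coefficient of $p$; it would be cleaner to introduce a separate letter for the coefficients of $p$ there.
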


In \cite{LP} we have further symmetrized the problem to avoid the change in
signs for sinks and sources.  In most of the proofs in this paper the symmetric version
is not necessary and may in fact obscure the main argument.
Certain parts of the argument, however, will require the symmetric
version in two dimensions, and we therefore make the following definition.

\begin{defn}
Fix $d$.  Let
\begin{equation}
H_d = \{ (a,b,c) \in \Z^3 : a+b+c = d, \text{ and } a,b,c \geq 0 \} .
\end{equation}
A \emph{2-dimensional symmetric diagram} is a function
\begin{equation}
D \colon H_d \to \{ P, N, 0 \} .
\end{equation}

We call $(a,b,c) \in \Z^3$ a \emph{symmetric node} or just \emph{node}
if $D$ is nonzero on at least one
of $(a-1,b,c)$, $(a,b-1,c)$, and $(a,b,c-1)$, and $D$ does not take both
the values $P$ and $N$ on
$(a-1,b,c)$, $(a,b-1,c)$, and $(a,b,c-1)$.  In other words, $(a,b,c)$ is a
node if the direct image
\begin{equation}
D\bigl( \{ (a-1,b,c), (a,b-1,c), (a,b,c-1) \} \bigr)
= \{ 0, P \} \text{ or } \{ 0, N \} .
\end{equation}
As before let $\#(D)$ denote the number of nodes in $D$.
\end{defn}

\begin{prop}
Let $Q(x_1,x_2,x_3)$ be a homogeneous polynomial:
\begin{equation}
Q(x_1,x_2,x_3) =
\sum_{\abs{\alpha} = d} C_\alpha x^\alpha
\end{equation}
and define
\begin{equation}
D(\alpha) :=
\begin{cases}
P & \text{ if $C_{\alpha} > 0$, } \\
0 & \text{ if $C_{\alpha} = 0$, } \\
N & \text{ if $C_{\alpha} < 0$. }
\end{cases}
\end{equation}
Then $D$ is a 2-dimensional symmetric diagram and
\begin{equation}
P(x_1,x_2,x_3) = (x_1+x_2+x_3) Q(x_1,x_2,x_3)
\end{equation}
has at least $\#(D)$ distinct monomials.
\end{prop}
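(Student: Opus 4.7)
The plan is to observe that the coefficient of $x^\alpha$ in $P = (x_1+x_2+x_3)Q$ is, by direct expansion,
\begin{equation*}
[x^\alpha] P \;=\; C_{\alpha-e_1} + C_{\alpha-e_2} + C_{\alpha-e_3},
\end{equation*}
where $e_i$ denotes the $i$-th standard basis vector of $\Z^3$ and we use the convention that $C_\beta = 0$ if any coordinate of $\beta$ is negative. These three multi-indices $\alpha - e_i$ are precisely the points $(a-1,b,c)$, $(a,b-1,c)$, $(a,b,c-1)$ appearing in the definition of a node at $\alpha = (a,b,c)$.

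Next I would invoke the node condition directly. If $\alpha$ is a node of $D$, then the direct image $D(\{\alpha - e_1, \alpha - e_2, \alpha - e_3\})$ equals either $\{0,P\}$ or $\{0,N\}$. In the first case each $C_{\alpha-e_i}$ is non-negative with at least one strictly positive; in the second case each is non-positive with at least one strictly negative. In either case no cancellation can occur, so the coefficient $[x^\alpha]P$ is nonzero.

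Finally, since distinct nodes of $D$ correspond to distinct multi-indices $\alpha \in \Z^3$ (and each such $\alpha$ has $|\alpha| = d+1$, so lies in the support of the homogeneous polynomial $P$ of degree $d+1$), the monomials $x^\alpha$ obtained this way are pairwise distinct. Therefore $P$ has at least $\#(D)$ distinct monomials with nonzero coefficient, proving the proposition.

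There is essentially no obstacle: the statement is a direct translation between the combinatorial node condition on $D$ and the impossibility of cancellation in the three-term sum giving each coefficient of $(x_1+x_2+x_3)Q$. The only care needed is the convention that $C_\beta = 0$ when $\beta$ has a negative entry, which matches the convention that $D(\beta) = 0$ off the simplex $H_d$ used in the definition of a node.
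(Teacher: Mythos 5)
Your proof is correct, and in fact the paper gives no explicit proof of this proposition at all; it is stated without justification, evidently viewed as immediate on analogy with Proposition~\ref{prop:ptodiag} (the nonsymmetric version), where the authors remark that a sink or source at $\alpha$ forces a nonzero coefficient at $x^\alpha$ and declare the counting conclusion ``immediate.'' Your argument is precisely the direct verification being left implicit: the coefficient of $x^\alpha$ in $(x_1+x_2+x_3)Q$ is $C_{\alpha-e_1}+C_{\alpha-e_2}+C_{\alpha-e_3}$, the node condition forces these three summands to be all $\geq 0$ (not all zero) or all $\leq 0$ (not all zero), cancellation is impossible, and distinct nodes live at distinct multi-indices on $|\alpha|=d+1$. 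You also correctly flag the one subtlety worth naming, namely the convention $C_\beta=0$ (equivalently $D(\beta)=0$) when $\beta$ leaves the simplex $H_d$, which is exactly what makes the coefficient formula and the node definition line up. Nothing is missing; this fills in the gap the paper leaves to the reader.
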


If we start with a $Q$ as above and look at
$q(x_1,x_2) = Q(x_1,x_2,-1)$, then $(x_1+x_2+x_3)$ becomes $(x_1+x_2-1)$ and we
are in the nonsymmetric setup.
We obtain a (nonsymmetric)
2-dimensional diagram by mapping $H_d$ into $\Z^2$ and flipping signs
for points corresponding to odd powers of $x_3$.  Sinks and sources
correspond to the symmetric nodes in the symmetric diagram.

The arguments in \cite{LP} relied heavily on a geometric interpretation of
the Newton diagram of $q$. Our terminology treats the Newton diagram as a
solid object formed by simplices at each of $\alpha \in \mathbb{Z}^n$ with
$D(\alpha) \neq 0$. The definitions of corners, edges and faces are made
with this picture in mind.  See Figure~\ref{fig:face2} for the geometric
intuition.

Our naming convention changes slightly from \cite{LP} so
that the naming is logical and consistent for diagrams of all
dimensions.

\begin{figure}[h!t]
\begin{center}
\includegraphics{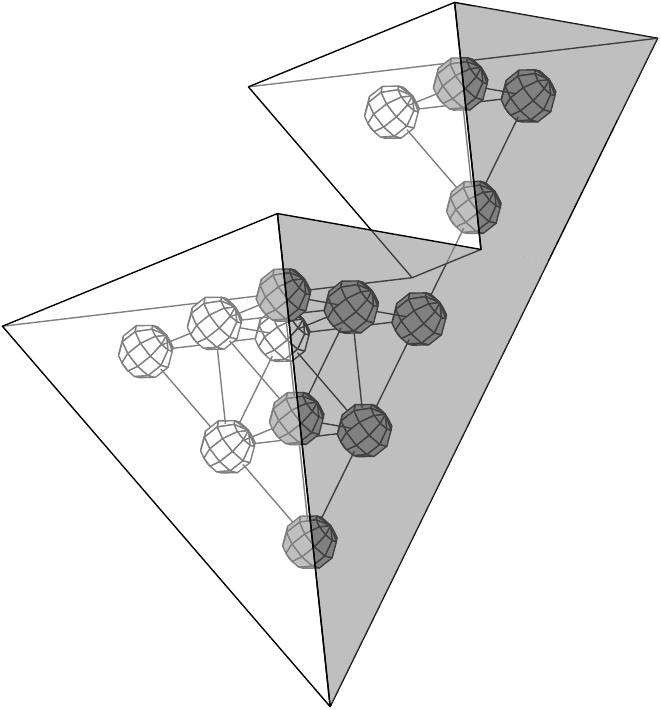}
\caption{Treating a diagram as a solid for $n=3$.  A vertical face and
its corresponding points are
shaded.}\label{fig:face2}
\end{center}
\end{figure}

\begin{defn}
An $\alpha \in \Z^n$ is called a \emph{top-corner} of a Newton
diagram $D$ if $D(\alpha) = 0$ and if there exists exactly one $k \in [1 ,
n]$ with $D\bigl(\operatorname{down}_k (\alpha)\bigr) \neq 0$. An $\alpha \in \Z^n$ is
called a \emph{bottom-corner} of a Newton diagram $D$ if $D(\alpha) \neq 0$
and if $D\bigl(\operatorname{down}_k (\alpha)\bigr) = 0$ for all $k \in [1 , n]$.

An $\alpha \in \Z^n$ is said to lie on a \emph{horizontal edge} of $D$ if
$D(\alpha) = 0$ and $D\bigl(\operatorname{down}_k (\alpha)\bigr) \neq 0$ for
exactly two distinct $k \in [1, n]$.

An $\alpha \in \Z^n$ is said to lie on a \emph{vertical edge} of $D$ if
$D(\alpha) \neq 0$ and $D\bigl(\operatorname{down}_k (\alpha) \bigr) \neq 0$
for exactly one $k \in [1, n]$.

Finally, an $\alpha \in \Z^n$ is said to lie in the \emph{interior of a
vertical face} of $D$ if $D(\alpha) \neq 0$ and
$D\bigl(\operatorname{down}_j (\alpha)\bigr) \neq 0$ for exactly two
distinct $k \in [1, n]$, and $\alpha$ lies in the interior of a
\emph{horizontal face} of $D$ if $D(\alpha) = 0$ and
$D\bigl(\operatorname{down}_k (\alpha)\bigr) \neq 0$ for exactly three
distinct $k \in [1, n]$.

If a node $\alpha \in \Z^n$ lies on a corner or edge or in the interior of a vertical or horizontal face then we call $\alpha$ respectively a corner-, edge- or facial-node. We often distinguish between top-corner nodes and bottom-corner nodes, and sometimes between vertical-edge nodes and horizontal-edge nodes.
\end{defn}

Two points $\alpha$ and $\beta$ are said to be \emph{adjacent} if there is
a $k$ such that $\alpha_j = \beta_j$ for all $j\not= k$ and
$\alpha_k-\beta_k = \pm 1$.
The subset $S \subset \Z^n$ is said to be \emph{connected} if for each $\alpha,
\beta \in S$ there is a path from $\alpha$ to $\beta$ along adjacent
elements of $S$. A vertical ($2$-dimensional) face of
$D$ is a maximal connected subset of the elements $\alpha \in \Z^n$ that are
either corners, lie on edges or lie in a vertical face, and that all lie in
the same $2$-dimensional plane in $\Z^n$ defined by $n-2$ equations of the
form $\alpha_j = c_j$. Similarly, a horizontal face of $D$ is a maximal
connected subset of the elements $\alpha \in \Z^n$ that are either corners,
lie on a horizontal edge or in a horizontal face, and that all lie in the
same $2$-dimensional plane in $\Z^n$ defined by $|\alpha| = d$ and $n-3$
equations of the form $\alpha_j = c_j$. For a vertical face there
are exactly two parameters $\alpha_k$ and $\alpha_m$ that can vary. We say
that the vertical face corresponds to the variables $x_k$ and $x_m$.

An important step in the proof of the $n=3$ case of Theorem
\ref{thm:monomialdegbnd}, proved in \cite{LP}, was to count the number of
nodes that lie in a $2$-dimensional face, and then add the numbers for all
the faces. A difficulty that arises when counting nodes in this manner is
that the edge-nodes and corner-nodes are counted multiple times, hence the
distinction between these nodes and facial nodes, which lie on only one
face and are therefore only counted once. A similar idea comes up in the
proof for $n \geq 4$, which we present in the next two sections.


\section{Views and Sides}

In a 2-dimensional diagram we say that $\alpha$ and $\beta$
are in the same \emph{row} if $\abs{\alpha} = \abs{\beta}$, that is
if they correspond to monomials of the same degree.

\begin{defn}
We define a \emph{simple diagram} to be a 2-dimensional diagram
with bottom nodes only in its lowest nonzero row.  The \emph{height} of a
simple diagram is the number of nonzero rows.
\end{defn}

In all arguments below
we simply ignore the bottom nodes of simple diagrams.

For each pair $(x_k, x_m)$, where $k \neq n$, in an $n$-dimensional diagram
$D$ of size $d$,
let us consider the corresponding vertical
faces and call this the \emph{side} of the diagram.  For each side
we can find a finite set of simple diagrams $\sF = (F_1,F_2,\ldots,F_s)$
such that the top and side edges of each $F_j$ correspond to top and
side edges of the diagram $D$.  Furthermore the heights of $F_i$
sum up to $d$.  Each edge node, top-corner node, and
facial node of $F_i$ corresponds to a node in $D$.  See Figure~\ref{fig:face} for
an example when $n=3$.  To find the $(F_1,\ldots,F_s)$, start at
the unique bottom node of $D$.  Then find the face $F_1$ containing
this bottom node
corresponding to the edge $(x_k,x_m)$.  Suppose that
$F_1$ has height $h_1$.  At the row directly above the top
row of $F_1$ (row $h_1+1$ in $D$),
find a nonzero point with the smallest degree in the
$x_1,\ldots,\hat{x}_k,\ldots,\hat{x}_m,\ldots,x_n$ variables
(where the $\hat{\cdot}$ means we are excluding that variable).  Find
the corresponding face and consider only that part of the face from row
$h_1+1$ upwards and mark the corresponding simple diagram $F_2$.
Continue until we reach the top row of $D$.  Depending on the points chosen
it is possible to get a different set of simple diagrams corresponding to
the edge $(x_k,x_m)$.  However, it suffices to fix one set of
simple diagrams corresponding to each edge.

\begin{figure}[h!t]
\begin{center}
\includegraphics{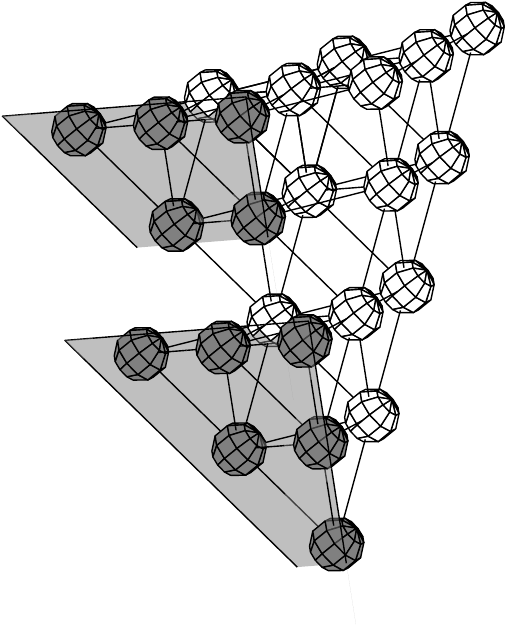}
\caption{Two simple diagrams $F_1$ and $F_2$ as faces corresponding to an edge
in a three dimensional diagram.}\label{fig:face}
\end{center}
\end{figure}

\begin{defn}
We call $\sF = (F_1,\ldots,F_s)$ a \emph{complete set of simple diagrams
corresponding to the edge $(x_k,x_m)$}.
\end{defn}

\begin{lemma} \label{dim2countlemma}
Let $D$ be a simple diagram of height $d$.  Let $f$ be the number of
facial nodes, $e$ the number of edge nodes, and $c$ the number of corner
nodes (excluding bottom nodes).  Then
\begin{equation}
2f + e + c \geq d+1 .
\end{equation}
\end{lemma}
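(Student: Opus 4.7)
My plan is to prove the lemma by induction on the height $d$.

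For the base case $d=1$, the unique nonzero row is row~$1$, and every nonzero point there is a bottom-corner (all down-neighbors vanish) and hence excluded from the count. However, each maximal block of $l$ consecutive nonzero positions in row~$1$ produces exactly two top-corner nodes at level~$2$: the positions just off each end of the block. Each such position has $D(\alpha)=0$ and exactly one nonzero down-neighbor, so it is automatically a sink or source. With $b\geq 1$ blocks, this gives $c\geq 2b\geq 2=d+1$.

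For the inductive step with $d\geq 2$, I would form $D'$ from $D$ by zeroing out every point in row $d$. Since zeroing the top row preserves the simple property, $D'$ is simple of height $d-1$ and the inductive hypothesis gives $2f(D')+e(D')+c(D')\geq d$. The remaining task is to show that restoring row $d$ raises the count by at least~$1$. The bookkeeping is delicate because node-types transform at row $d$ and level $d+1$: a facial node of $D$ in row~$d$ becomes a horizontal-edge node of $D'$ in row~$d$ (so the weight drops from $2$ to $1$); a vertical-edge node of $D$ in row~$d$ becomes a top-corner node of $D'$ in row~$d$ (weight unchanged); and all nodes of $D$ at level $d+1$ disappear in $D'$. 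Classifying each filled point in row $d$ by how the sign of $D(\alpha)$ compares to those of its nonzero down-neighbors, and counting top-corners and horizontal-edges at level $d+1$ via the block structure of row $d$, reduces the inductive step to a combinatorial inequality involving the block count and intra-block sign-change count of row~$d$.

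The main obstacle is that this naive increment can vanish when row $d$ has many intra-block sign changes that offset the contribution of new level-$(d+1)$ top-corner and horizontal-edge nodes. The key structural observation is that each sign change between adjacent positions $\alpha,\alpha'$ in row $d$ either (a) forces their shared down-neighbor in row $d-1$ to be zero, thereby opening a gap in row $d-1$ that introduces extra top-corner and horizontal-edge nodes at that level and below, or (b) forces the shared down-neighbor to carry a sign opposite to one of $\alpha,\alpha'$, directly contributing to the refined sign counts for row $d$. I would charge each sign change against one of these two contributions, and in case (a) exploit the slack produced in the induction by the forced gap in row~$d-1$. Making this slack-propagation precise---likely by strengthening the inductive hypothesis to also track structural data such as the number of single-zero gaps in the diagram---will be the technically hardest part of the argument.
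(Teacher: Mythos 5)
Your induction-on-height strategy is genuinely different from the paper's proof, which first \emph{fills} the diagram (a delicate case analysis showing that holes can be filled in without increasing $2f+e+c$) and then runs a single global sign-change count over all rows simultaneously, comparing $s_1$ to $s_d$. There is no peeling of rows in the paper. Your route could in principle work, but as written it has a concrete gap: the inductive step, that zeroing the top row drops $2f+e+c$ by at least $1$, is false for general simple diagrams. You acknowledge this and propose to fix it by strengthening the inductive hypothesis, but you do not supply the strengthened statement or carry out the bookkeeping, and in fact your case analysis (a)/(b) about the shared down-neighbor is not accurate as stated (a sign change in row $d$ does not force the shared down-neighbor in row $d-1$ to be zero).

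Here is an explicit example where the increment vanishes. Index positions in a row by their first coordinate. Take row $d-1$ with values $P,0,P$ at positions $0,1,2$, and row $d$ with values $P,N,P,N$ at positions $0,1,2,3$, the lower rows filled solidly with $P$'s so the diagram is simple. In $D$, at level $d$ the only nodes are the sinks at positions $1$ and $3$ (each a vertical-edge node of weight $1$, since one of its two down-neighbors is the gap), and at level $d+1$ only the two top-corners at positions $0$ and $4$ are nodes, for a total contribution of $4$ from the top two levels. In $D'$ (row $d$ zeroed), level $d$ has four top-corner nodes at positions $0,1,2,3$, again totalling $4$. So the increment is exactly $0$ and the naive induction fails. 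The paper's filling step removes precisely the hole at position $1$ in row $d-1$ that causes this; after filling, the difficulty disappears. Your proposal correctly diagnoses that gaps and sign changes are the obstruction, but without a precise strengthened inductive invariant (one that, for instance, also credits the slack created by such gaps), the argument is incomplete.
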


\begin{proof}
We claim that we can ``fill'' the diagram without increasing the sum $2f + e
+ c$, starting with the zeroes on the lowest nonzero row, and working our
way up to the highest nonzero row of the diagram. By a filled diagram we
mean that the nonzero points in the lowest row are connected, and that
up to the highest row of the diagram, a nonzero point has only nonzero
points above it. Once the diagram is filled, the claim follows from the following argument, which was also used in the proof of Lemma 4.2 in \cite{LP}.

We refer to the horizontal rows of our filled simple diagram by rows $1$ through $d$, starting with the lowest row and ending with the highest. For each row we count the number of sign changes in the row as we move from one end to the other. We denote this number by $s_j$, the number of sign changes, where $j$ refers to the number of the row. For example, if the lowest row consists of only $P$-points then $s_1 = 0$.

Now consider rows $j$ and $j+1$, and note that if $s_j$ differs from $s_{j+1}$ then there must be at least one facial or edge node consisting of points in these two rows. In fact, note that $2$ times the number of facial nodes plus the number of edge nodes that occur here must be at least as large as $|s_{j+1} - s_j|$. Hence the total number of nodes (counting facial nodes for 2) in the diagram without counting the edge and corner nodes on the top row must be at least $|s_d - s_1|$. The number of edge or corner nodes on the top row equals exactly $l_d + 1 - s_d$, where $l_j$ denotes the length of the $j$-th row. Hence the total number of nodes is at least
\begin{equation}
l_d + 1 - s_d + |s_d - s_1| \ge l_d + 1 - s_d \ge l_d + 1 - (l_1 - 1) \ge d+1,
\end{equation}
which gives the desired inequality.

In order to prove our claim that we can always fill the diagram without increasing the number of nodes, we need to consider several cases. We start with the
lowest nonzero row. Suppose the $P$- and $N$-points on this row do not form
a connected subset of $\Z^n$. By the definition of a simple diagram there
are no higher bottom-nodes, which means that there must exist a sequence of
$0$-points on the lowest row that is enclosed on either side by nonzero points.
We fill
this sequence with alternating $P$'s and $N$'s. Of course such a filling
can be done
in two distinct ways, and it turns out that for at least one way the sum $2f
+ e + c$ does not increase. To see this fact, note that the only way the count
can be increased is when an edge-node above one of the most outer $0$-points
is changed into an facial node. However, if this can happen then we can
choose what sign we change the corresponding $0$-point to,
such that the edge-node
disappears. In this case the $(2f + e + c)$-count decreases by $1$ on
that outer $0$-point, and as it can increase by at most $1$ on the other side the
sum $2f + e + c$ does not increase. See Figure~\ref{fig:fill2d1}.

\begin{figure}[h!t]
\begin{center}
\includegraphics{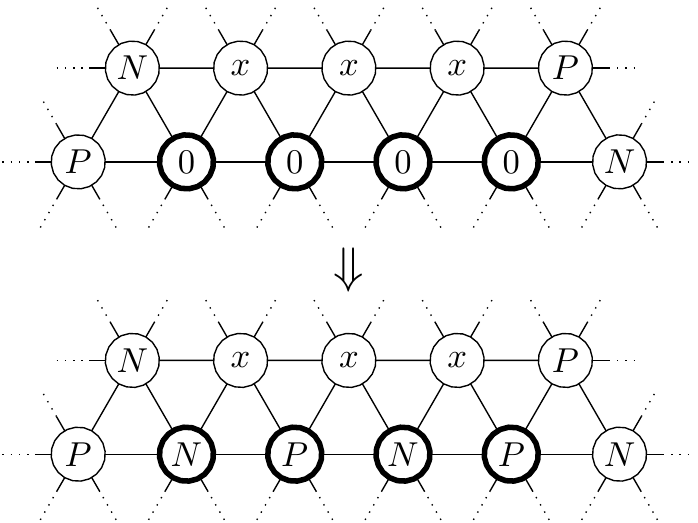}
\caption{Filling in the lowest row by a correctly chosen sequence of $P$'s
and $N$'s.  The exact values of the points marked with $x$'s is not relevant.}\label{fig:fill2d1}
\end{center}
\end{figure}

The argument for higher rows is similar but slightly different cases need to
be considered. We consider the lowest row for which there exist zeroes that
lie above nonzero points. If there are no nonzero points on this row then we
are done filling, so let us suppose that there exist nonzero points in this
row. Consider a $0$-point on
this row that lies above at least one nonzero point, and that is adjacent to a
nonzero point. We first consider the case when there are nonzero points on
both sides of this $0$-point. In this case we again have that by assigning
an $N$ or $P$ to this $0$-point, we can only increase the count if an
edge-node turns into an facial node. However, each time this can happen, the
edge node disappears if we assign the other sign to the $0$-point. By
choosing the sign for which the number of edge-nodes that changes into a
facial node is minimal, the $(2f + e + c)$-count does not increase.

The last case we need to consider is when the $0$-point lies next to exactly
one nonzero point. By changing this $0$-point into an $N$- or $P$-point we
may not only increase the $2f + e + c$ count by changing an edge-node into a
facial node, but we also create a new corner-node which increases the
count by $1$. If the number of edge-nodes that can change to a facial node
is odd (in which case it necessarily is $1$), then we can always choose the
sign so that more edge nodes disappear than there are changed into facial
nodes and we are done. So let us assume that the number of edge nodes that
can change into facial nodes is even, either $0$ or $2$. In this case we
fill a horizontal list of $0$-nodes, starting with the given $0$-node and
ending with the nearest $0$-node that is either adjacent to a nonzero node,
or that is part of a node itself. We fill this list with alternating $P$- and
$N$-points, see Figure \ref{fig:fill2d2} for an example. Note that in this example we cannot fill fewer $0$-points without increasing the number of nodes.

\begin{figure}[h!t]
\begin{center}
\includegraphics{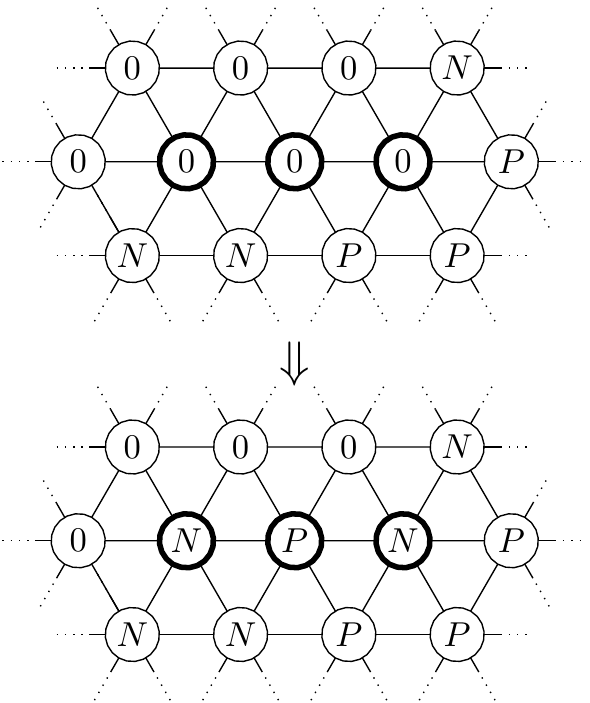}
\caption{Filling in the rightmost three $0$-points by alternating $P$- and $N$-points.}\label{fig:fill2d2}
\end{center}
\end{figure}

When the nearest $0$-point is adjacent to a nonzero
node we no longer create a new corner-node and we again choose the sign so
that at least as many edge-nodes are removed as there are edge-nodes changed
into facial nodes and we are done. In the case when the nearest $0$-point is
not adjacent to a nonzero point but is a node itself, then by filling that
point we can change the $(2f + e + c)$-count by exactly $1$. By our
assumption the number of nodes that is changed for the $0$-point that we
started with was even, so the total is odd. Therefore we can always choose
the sign of the alternating list such that more edge- and corner-nodes
disappear than edge-nodes are changed into facial nodes. The gain obtained
from the new corner-node can therefore be countered by choosing the proper
sign. This completes the argument.
\end{proof}

\begin{defn}
Let $D$ be an $n$-dimensional diagram.
For each $\alpha \in \Z^{n-1}$ define
\begin{equation}
\gamma_{k,m}(\alpha,a,b) := (\alpha_1,\ldots,\alpha_{k-1},a,\alpha_{k+1},\ldots,\alpha_{m-1},b,\alpha_{m},
\alpha_{m+1},\ldots,\alpha_{n-1}) \in \Z^n ,
\end{equation}
where $1 \leq k \leq n-1$, $1 \leq m \leq n$, and $k \not= m$.  That is, we replace the
$k$th element with $a$ and we prepend $b$ before the $m$th element.  If
$m=n$ then we append $b$ onto the end.  E.g.\ $\gamma_{2,3}\bigl( (0,2), a,
b \bigr) = (0,a,b)$.

Let
\begin{equation}
\gamma_{k,m}(\alpha)
:=
\gamma_{k,m}(\alpha,a,b) ,
\end{equation}
where $a+b = \alpha_k$, and $a$ is the smallest integer such that
$D\bigl(\gamma_{k,m}(\alpha,a,b) \bigr)$ is nonzero.

Define $D' = V(D,k,m)$ as an
$(n-1)$-dimensional diagram by setting
\begin{equation}
D'(\alpha) :=
D\bigl(\gamma_{k,m}(\alpha) \bigr).
\end{equation}

We say that a node $\alpha \in \Z^{n-1}$ of $D'$ corresponds to a node
$\beta$ of $D$ if $\beta = \gamma_{k,m}(\alpha)$.
\end{defn}

Intuitively, $V(D,k,m)$ is the ``view'' along the edge looking from
$x_k$ towards $x_m$.  See Figure~\ref{fig:view} for an example in $n=3$.

\begin{figure}[h!t]
\begin{center}
\includegraphics{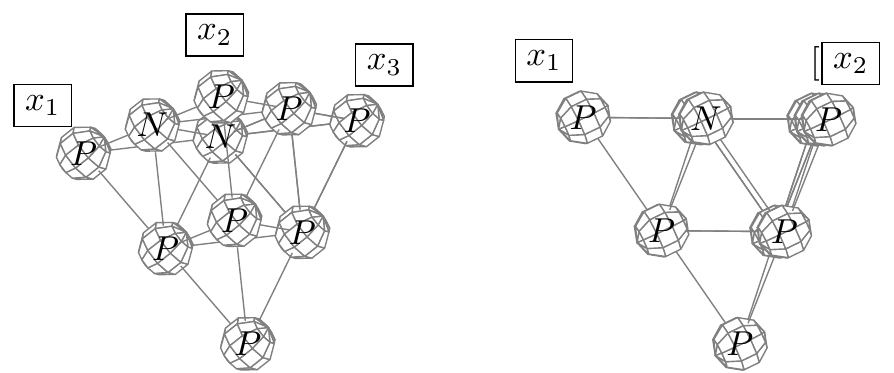}
\caption{Example ``view'' of a diagram.  On the left is a 3-dimensional
diagram $D$ with directions labeled by the corresponding variables.
On the right we have the 2-dimensional $V(D,2,3)$.}\label{fig:view}
\end{center}
\end{figure}

It is not hard to see that if $\alpha \in \Z^{n-1}$ is a node of $V(D,k,m)$
then it always corresponds to a node $\gamma_{k,m}(\alpha)$ of $D$.
Thus we have the rather simple estimate
\begin{equation}
\#(V(D,k,m)) \leq \#(D) .
\end{equation}
In fact, if $\alpha$ is a sink (resp.\ source), then
$\gamma_{k,m}(\alpha)$ is a sink (resp.\ source).  Hence we also
have the following proposition.

\begin{prop}
If $D$ has a unique source at the origin $\alpha=(0,\ldots,0)$ and
all other nodes are sinks, then $V(D,k,m)$ has a unique source at
the origin and all other nodes are sinks.
\end{prop}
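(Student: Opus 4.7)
The plan is to chase definitions, leveraging the sink- and source-preserving correspondence between nodes of $V(D,k,m)$ and nodes of $D$ stated in the paragraph just before the proposition. Three things need to be verified: that any source of $V(D,k,m)$ must sit at the origin, that the origin of $V(D,k,m)$ actually is a source, and that every remaining node is a sink.

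For uniqueness of the source, suppose $\alpha \in \Z^{n-1}$ is a source of $V(D,k,m)$. By the cited remark, $\gamma_{k,m}(\alpha)$ is a source of $D$, so by hypothesis $\gamma_{k,m}(\alpha) = (0,\ldots,0) \in \Z^n$. Reading off the formula for $\gamma_{k,m}(\alpha,a,b)$, every entry of $\alpha$ as well as the auxiliary values $a$ and $b$ must vanish, so $\alpha = (0,\ldots,0) \in \Z^{n-1}$.

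To see that the origin of $V(D,k,m)$ is a source, note first that the origin of $D$ is itself a source while $D$ vanishes at each $\operatorname{down}_j(0,\ldots,0)$ (these points have a negative coordinate), so the source condition forces $D(0,\ldots,0) = P$. Now at $\alpha = (0,\ldots,0) \in \Z^{n-1}$ the definition requires $a + b = 0$ with $a$ minimal so that $D\bigl(\gamma_{k,m}(\alpha,a,b)\bigr) \neq 0$; any $a < 0$ puts a negative entry in the $k$-th coordinate of $\Z^n$ where $D$ vanishes, so the minimum is attained at $a = b = 0$, giving $D'(0,\ldots,0) = P$. For each $j \in \{1,\ldots,n-1\}$, $\operatorname{down}_j(0,\ldots,0)$ carries a negative entry that $\gamma_{k,m}$ transports into a negative entry of $\Z^n$ no matter how $a,b$ are chosen, so $D'(\operatorname{down}_j(0,\ldots,0)) = 0$. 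This verifies the source condition at the origin of $\Z^{n-1}$.

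Every remaining node of $V(D,k,m)$ is either a sink or a source, and uniqueness rules out the latter, so it is a sink. The only genuinely subtle point is the direct verification in the previous paragraph, because the cited remark provides the node correspondence only in the direction from $V(D,k,m)$ to $D$, not the other way around; the rest is bookkeeping.
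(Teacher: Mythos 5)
Your proof is correct and takes the same route the paper implicitly intends: the paper states the node-to-node correspondence under $\gamma_{k,m}$ and then treats the proposition as an immediate consequence, while you make the two hidden steps explicit. You rightly flag the one genuinely nontrivial point, namely that the correspondence only maps nodes of $V(D,k,m)$ to nodes of $D$, so one must separately verify that the origin of $V(D,k,m)$ actually \emph{is} a source; your direct check that $D'(0,\ldots,0)=P$ and $D'\bigl(\operatorname{down}_j(0,\ldots,0)\bigr)=0$ handles this cleanly. One small imprecision: for $j=k$ the negative entry is not ``transported'' by $\gamma_{k,m}$ --- the $k$th coordinate of $\alpha$ is replaced, not copied --- rather the constraint $a+b=\alpha_k=-1$ forces one of $a,b$ to be negative, which yields the same vanishing of $D$; worth phrasing that case separately.
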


When diagrams have only a single source at the origin, they have a special
form.  We define two conditions that we need in the proof.

\begin{defn}
A set $K \subset \Z^2$ has \emph{left overhang} if there exists
a point $(a,b) \in K$, $(a,b) \not= (0,0)$ such that
$(a,b-1) \notin K$ and $(a-1,y) \notin K$ for all $y \geq b$.

A point $(a,b) \in K$ is \emph{right overhang} if it is a left overhang
after swapping variables.  We say simply that $(a,b)$ is an \emph{overhang}
if it is a left or a right overhang.

Let $K \subset \Z^n$ be the support of a diagram $D$, that is
let $K = D^{-1}(\{P,N\})$.  Define a 2-dimensional projection
\begin{equation}
\pi(K,k,m)
: =
\{ (a,b) \in \Z^2 : \text{ there exists } \alpha \in K \text{ with }
\alpha_k= a \text{ and } \alpha_m = b \} .
\end{equation}

We say that $D$ has \emph{no overhang} if $\pi(K,k,m)$ has no overhang for every $k,m = 1,\ldots,n$,
$k\not=m$, where $K$ is the support of $D$.
\end{defn}

The above definition of no overhang agrees precisely with the definition
from \cite{LP}.  See Figure~\ref{fig:overhang} for an example.

\begin{figure}[h!t]
\begin{center}
\includegraphics{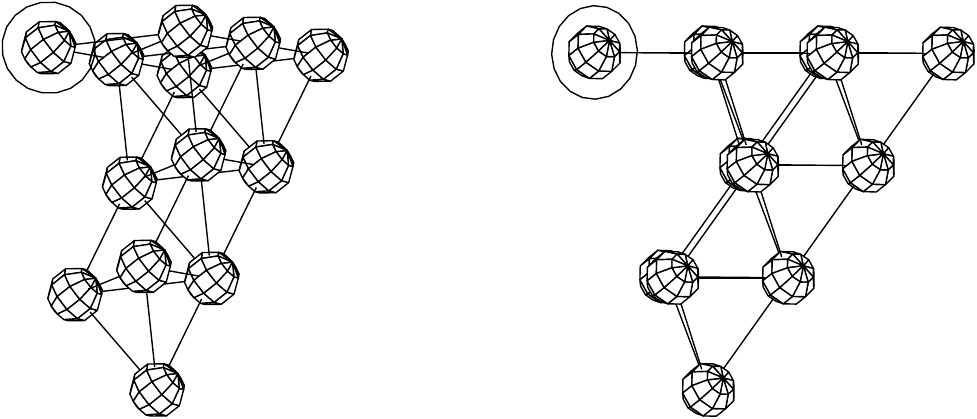}
\caption{Diagram showing an example of overhang in
$n=3$.}\label{fig:overhang}
\end{center}
\end{figure}

\begin{defn}
Let $D$ be an $n$-dimensional diagram ($n \geq 2$).
A set $E \subset D^{-1}(\{P,N\}) \subset \Z^n$ (subset of the support
of $D$) is an \emph{outside vertical edge} if
there exists $\alpha \in \Z^n$ and an integer $k$ such that
\begin{equation}
E = \{
\operatorname{down}_k(\alpha),
\operatorname{down}_k\bigl(
\operatorname{down}_k(\alpha)\bigr),
\ldots,
\operatorname{down}_k\bigl(
\operatorname{down}_k\bigl( \cdots \operatorname{down}_k(\alpha)
\cdots \bigr)\bigr) \} ,
\end{equation}
and such that if $D(\alpha) = 0$,
$D\bigl(\operatorname{down}_j(\alpha)\bigr) = 0$ for all $j \not= k$,
and
$D\bigl(\operatorname{down}_j(\beta)\bigr) = 0$ for all $j \not= k$
and all $\beta \in E$.
See Figure~\ref{fig:outsideedge}.

If $\beta \in E$ and $D\bigl(\operatorname{down}_k(\beta)\bigr) = 0$,
then we say $\beta$ is a \emph{bottom node} (it is in fact a node).

We say that a diagram $D$ has \emph{no outside vertical edge nodes}
if for every outside vertical edge $E$, no element $\beta \in E$
is a node, except possibly the bottom node if $E$ contains one.
\end{defn}

\begin{figure}[h!t]
\begin{center}
\includegraphics{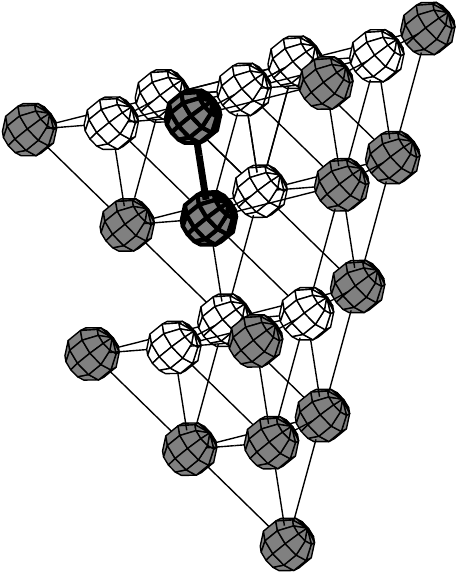}
\caption{Diagram with outside edges marked by dark balls.  One specific
outside edge is marked with thick lines.}\label{fig:outsideedge}
\end{center}
\end{figure}

\begin{prop} \label{overandvertprop}
If $D$ is an $n$-dimensional ($n \geq 2$)
diagram with a unique source at the origin (all other nodes
are sinks) then $D$ has no overhang and no outside vertical edge nodes.
\end{prop}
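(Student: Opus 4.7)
The plan is to prove both claims by contradiction, in each case producing a source at some nonzero point, which violates the uniqueness hypothesis; the two arguments share the flavor that the local structure around a suspicious point is constrained enough that either that point, or one reached by a short walk along a one-dimensional chain, must be a source. For the outside vertical edge claim, I would fix an outside vertical edge $E$ with top $\alpha$ in direction $k$, and write $\gamma_i = \operatorname{down}_k^i(\alpha)$. The outside hypothesis forces $D(\operatorname{down}_j(\gamma_i)) = 0$ for every $j \neq k$, so the node status of $\gamma_i$ is determined by the pair $(D(\gamma_i), D(\gamma_{i+1}))$. Suppose for contradiction that some non-bottom $\gamma_i$ is a node: then these two entries are nonzero with opposite signs. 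In the case $D(\gamma_i) = P$ and $D(\gamma_{i+1}) = N$, the point $\gamma_i$ itself is a source, and $\gamma_i \neq 0$ because $\gamma_i = 0$ would force $\gamma_{i+1}$ outside the positive orthant and hence $D(\gamma_{i+1}) = 0$. In the case $D(\gamma_i) = N$ and $D(\gamma_{i+1}) = P$, I would walk upward along the edge: $\gamma_{i-1}$ has all non-$k$ downs zero and its $k$-down of sign $N$, so $D(\gamma_{i-1}) \in \{P, 0\}$ would make $\gamma_{i-1}$ a non-origin source; hence $D(\gamma_{i-1}) = N$. Inducting upward gives $D(\gamma_1) = N$, and finally $\alpha$ (with $D(\alpha) = 0$, $D(\operatorname{down}_k(\alpha)) = N$, and other downs zero) becomes a source at $\alpha \neq 0$, the desired contradiction.

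For the no overhang claim, after reducing the right-overhang case to the left-overhang case by interchanging $x_k$ and $x_m$, I suppose a left overhang at $(a,b) \in \pi(K,k,m)$ and pick $\beta^* \in K$ with $\beta^*_k = a$, $\beta^*_m \geq b$ that lexicographically minimizes the pair $\bigl(\sum_{j \neq k,m} \beta_j,\ \beta_m\bigr)$. The overhang condition gives $\operatorname{down}_k(\beta^*) \notin K$; $\operatorname{down}_m(\beta^*) \notin K$ follows from the overhang condition if $\beta^*_m = b$ and from secondary minimality if $\beta^*_m > b$; and primary minimality gives $\operatorname{down}_j(\beta^*) \notin K$ for every $j \neq k,m$. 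Thus $\beta^*$ is an isolated nonzero point, hence a node, and $D(\beta^*) = P$ would give a non-origin source, so $D(\beta^*) = N$. I then set $\beta^{(i)} = \beta^* + i\,e_m$. Because raising $\beta_m$ does not change the primary quantity, exactly the same argument shows $\operatorname{down}_j(\beta^{(i)}) \notin K$ for every $j \neq m$ and every $i \geq 0$, with $\operatorname{down}_m(\beta^{(i)}) = \beta^{(i-1)}$ the only potentially nonzero down. An induction on $i$ then forces $\beta^{(i)} \in K$ with $D(\beta^{(i)}) = N$ for every $i \geq 1$, since both $D(\beta^{(i)}) = 0$ and $D(\beta^{(i)}) = P$ would make $\beta^{(i)}$ into a non-origin source. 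This produces an infinite chain of $N$-points inside the finite support of $D$, the final contradiction.

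The step I expect to be most delicate is calibrating the minimization so that primary minimality applies uniformly to every point $\beta^{(i)}$ in the upward chain, not just to $\beta^{(0)} = \beta^*$; the lexicographic order $\bigl(\sum_{j \neq k,m} \beta_j,\ \beta_m\bigr)$ is chosen precisely so that raising $\beta_m$ above its minimum leaves the primary quantity unchanged. Once that setup is in place, the remaining sink/source case analysis in both parts is a routine unfolding of the definitions together with the unique-source hypothesis.
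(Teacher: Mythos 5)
Your argument is correct. For the outside‑vertical‑edge half you argue essentially as the paper does: a would‑be non‑bottom node on the edge cannot be a source (uniqueness at the origin), so it is a sink with value $N$, and walking the one‑dimensional chain to the top $\alpha$ produces a source at a nonzero point; the paper phrases the same thing as a sign change between the $P$ just below $\alpha$ and the $N$ at the node. For the overhang half you take a genuinely different route. The paper reduces to two dimensions: it asserts that the projection $\pi(K,k,m)$ in the definition of overhang is the support of some iterated view $V(\cdots V(D,k_1,k_2),\ldots)$, and then repeats the outside‑edge argument in that $2$‑dimensional diagram, concluding the chain above the overhang point is all $P$'s and hence the overhang point itself is a non‑origin source. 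Your argument works directly in $n$ dimensions: the lexicographic minimization of $\bigl(\sum_{j\neq k,m}\beta_j,\ \beta_m\bigr)$ over the fiber above $(a,b)$ produces a support point $\beta^*$ all of whose downs lie outside $K$, and the overhang inequalities together with primary minimality keep every $\operatorname{down}_j(\beta^{(i)})$, $j\neq m$, outside $K$ as you climb the chain $\beta^{(i)}=\beta^*+ie_m$, forcing $D(\beta^{(i)})=N$ for all $i$ and contradicting finiteness of the support. Your route is more self‑contained: it avoids the translation of overhang across iterated views, which the paper states but does not spell out (the view operation merges two coordinates into one rather than forgetting one, so identifying $\pi(K,k,m)$ with the support of an iterated view is not immediate from the definitions alone). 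The paper's approach in turn reuses the view machinery that it needs anyway for the main induction. The calibration you single out — putting $\sum_{j\neq k,m}\beta_j$ first in the lexicographic order so that raising $\beta_m$ leaves it unchanged — is exactly what makes the upward induction go through, and your checks that $\beta^*\neq 0$ and $\beta^{(i)}\neq 0$ are the right places to invoke $(a,b)\neq(0,0)$.
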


\begin{proof}
First, suppose that $D$ has an outside vertical edge node, then it has to be
a sink at $\beta \in E$ for some outside vertical edge $E$.
Then $D(\beta) = N$ as $D(\beta)$ cannot be 0.  If $\alpha$ and $k$ are
as in the definition of the edge $E$, then $\alpha$ must be a node, and by our hypothesis on $D$ it has to
be a sink.  As $D(\alpha) = 0$, then
$D\bigl(\operatorname{down}_k(\alpha)\bigr) = P$, but then
there must be some source on the edge between $\alpha$ and $\beta$
contradicting the hypothesis on $D$.

Suppose that there is an overhang.  We note that the 2-dimensional
projection of the support in the definition of no overhang is the support
of some diagram
\begin{equation}
D' =
V\bigl(\cdots V(V(D,k_1,k_2),k_3,k_4),\ldots,k_{m-1},k_m\bigr),
\end{equation}
for some sequence of integers $k_1,\ldots,k_m$ taking views along distinct
edges until we arrive at a 2-dimensional diagram $D'$ that has an
overhang at some $(a,b) \in \Z^2$.  By definition of overhang,
$(a,b)$ is the bottom node of an outside vertical edge while not being
the origin.  The outside vertical edge must consist of $P$'s only
otherwise we would obtain a source in the diagram, but if
$D\bigl((a,b)\bigr) = P$, then $(a,b)$ is a source.  We have a contradiction
as $(a,b)$ was not the origin.
\end{proof}

The definition of $V(D,k,m)$ can be used even for a symmetric 2-dimensional
diagram.  The result is a 1-dimensional symmetric diagram, that is, simply a
single row of $P$'s, $N$'s, and $0$'s.  A node on such a 1-dimensional
diagram is then simply two points next to each other that are of the
configuration
$(0,N)$, $(N,0)$, $(N,N)$,
$(0,P)$, $(P,0)$, $(P,P)$.  In other words, a node is two points next to
each other not in the configuration $(0,0), (P,N)$ or $(N,P)$.

\begin{prop} \label{prop:horizhid}
Let $D$ be a 2-dimensional symmetric diagram.
Either
\begin{enumerate}[(i)]
\item $\#(D) = 3$, in which case $D$ has one nonzero point, or
\item $\#(D) > 3$, in which case size of $D$ is greater than one, and
there exists a view $V(D,k,m)$ (where $k,m = 1,2,3$), such that
\begin{equation}
\#\bigl(V(D,k,m)\bigr) \leq \#(D) - 2 .
\end{equation}
\end{enumerate}
\end{prop}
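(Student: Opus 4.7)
For part (i), the plan is to prove that any nonzero symmetric 2-dimensional diagram $D$ has $\#(D) \ge 3$, with equality if and only if the support $S = D^{-1}(\{P,N\})$ is a singleton. Since $D$ is supported on $H_d$ and the predecessors $\operatorname{down}_k(\alpha)$ of any $\alpha \in H_d$ lie in $H_{d-1}$ where $D$ vanishes, no point of $H_d$ is itself a node; every node of $D$ lies on $H_{d+1}$. If $|S|=1$ with unique nonzero point $(a,b,c)$, the three successors $(a+1,b,c), (a,b+1,c), (a,b,c+1)$ each have $(a,b,c)$ as their only nonzero predecessor and are nodes, giving $\#(D)=3$. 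If $|S|\ge 2$, pick any two points $\alpha \ne \beta$ in $S$; their six successors on $H_{d+1}$ comprise either five or six distinct points (one shared iff $\beta-\alpha = e_i - e_j$ for some $i \ne j$). The non-shared successors can fail to be nodes only if additional support points contribute predecessors of opposite sign; I would check by case analysis that at least four successors persist as nodes, yielding $\#(D) \ge 4$.

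For part (ii), first note that $V(D,k,m)$ is a 1-dimensional symmetric diagram of size $d+1$. Since $\#(D) > 3$ forces $|S|\ge 2$, which requires $d\ge 1$, this confirms the ``size greater than one'' condition. Next I would establish a correspondence between nodes of the view and nodes of $D$ on $H_{d+1}$: each node of $V(D,1,2)$ is a transition at positions $(c-1,c)$ of type $(P,0), (0,P), (P,P), (N,0), (0,N)$, or $(N,N)$, and each such transition yields an explicit witness $D$-node on $H_{d+1}$. For example, a $(P,0)$ transition at $(c-1,c)$ is witnessed by $(a_0, b_0, c) \in H_{d+1}$, where $(a_0, b_0, c-1)$ is the smallest-$x_1$ nonzero point of row $x_3 = c-1$; the other transition types admit similar witnesses determined by case analysis on the relative smallest-$x_1$ indices of the two adjacent rows. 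These witnesses are distinct across different view-nodes, so $\#(V(D,k,m))$ counts at most the number of witnessed $D$-nodes.

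The crux is then to show, for an adaptively chosen view direction $(k,m)$, that at least two $D$-nodes on $H_{d+1}$ are not witnessed by $V(D,k,m)$. An unwitnessed node $(a,b,c) \in H_{d+1}$ in the view $V(D,1,2)$ is one whose nonzero predecessors in $H_d$ all fail to be the smallest-$x_1$ nonzero of their row; geometrically, these are successors of \emph{interior} points of the support $S$ in the $x_1$-direction. The strategy is to pick $(k,m)$ so that $S$ has the greatest interior in the $x_k$-direction, producing at least two unwitnessed nodes. The main obstacle will be the degenerate case $|S|=2$, where $S$ has no interior in any direction; here I would fall back on a direct analysis of the three subcases (non-adjacent, adjacent with same sign, adjacent with opposite sign), exhibiting in each case a view whose 1-dimensional diagram is sparse enough to satisfy $\#(V(D,k,m)) \le \#(D) - 2$.
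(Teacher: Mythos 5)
Your part (i) plan takes a genuinely different route from the paper: the paper simply cites the prior result from~\cite{LP} that $\#(D)=3$ forces a single nonzero point, whereas you propose a direct count via successors on $H_{d+1}$. The outline is sound for $|S|=1$ and $|S|=2$, but you leave $|S|\geq 3$ to an unwritten case analysis, so as submitted this is a sketch, not a proof.

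For part (ii) there is a genuine gap. Your central claim is that the unwitnessed $D$-nodes are exactly the successors of \emph{interior} points of $S$ in the $x_k$-direction, and that picking the view direction with the greatest interior yields at least two unwitnessed nodes, with a fall-back only for $|S|=2$. Both halves of this fail. First, $|S|\geq 3$ can have zero interior in every view direction: take $S=\{(0,1,2),(1,2,0),(2,0,1)\}$ in $H_3$; every row (in every coordinate direction) is a singleton, so no point of $S$ is ever ``hidden behind'' another. Second, the characterization of unwitnessed nodes is too narrow: in that same example (with all $P$'s) one finds $\#(D)=9$ while every view has $\#(V)\le 4$, so there are at least five unwitnessed nodes despite zero interior — the witnesses are missing for other reasons (multiple $D$-nodes per degree collapsing to a single $(P,P)$ view-node, boundary nodes of the view absorbing several $D$-nodes, etc.). The paper avoids these difficulties entirely by classifying the one-dimensional view rather than the support of $D$: if every view's support is connected and has only the two boundary nodes, then $\#(V)=2\le \#(D)-2$ is immediate from $\#(D)>3$; if some view has a $(P,P)$ or $(N,N)$ node, the reverse view $V(D,m,k)$ hides that interior node together with one boundary node; and if some view's support is disconnected, two boundary-type nodes are hidden. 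To repair your argument you would need a correct description of which $D$-nodes a given view actually witnesses and a selection criterion that works when $S$ has no interior in any direction, which is considerably more than the announced fall-back for $|S|=2$.
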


\begin{proof}
In \cite{LP} the authors have shown that if $\#(D) = 3$, then $D$ is of size
one, that is $D$ is a single point.

In case $\#(D) > 3$, first suppose that the support of
every $V(D,k,m)$ is connected, that is, there is exactly one node of the
form $(0,*)$ and exactly one node of the form $(*,0)$, where $*$ is $P$ or
$N$.  If every $V(D,k,m)$ has only those two nodes, then
$\#\bigl(V(D,k,m)\bigr) \leq \#(D) - 2$ for every $V(D,k,m)$.
Thus suppose that
$V(D,k,m)$ is such that there is a node $\alpha$ in $D$ that corresponds
to a node of the form $(P,P)$ or a node $(N,N)$
in $V(D,k,m)$.  Then $\alpha$ does not correspond to any node in
$V(D,m,k)$, see Figure~\ref{fig:2d1dview}.  It is easy to see that at least one node of the form
$(0,*)$ or $(*,0)$ gets hidden by
$V(D,m,k)$.  Thus
$\#\bigl(V(D,m,k)\bigr) \leq \#(D) - 2$ for every $V(D,m,k)$.

\begin{figure}[h!t]
\begin{center}
\includegraphics{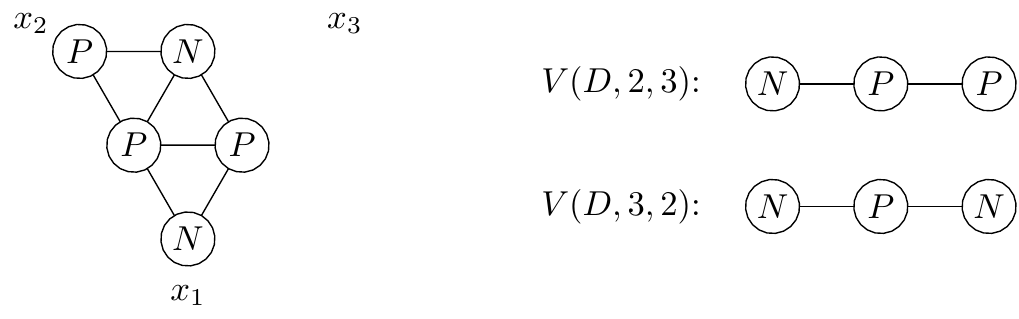}
\caption{A 2-dimensional symmetric diagram with directions labeled by the
corresponding variables.  Note that $V(D,2,3)$ contains a node of the form
$(P,P)$ that does not correspond to any node (gets hidden) in $V(D,3,2)$.}\label{fig:2d1dview}
\end{center}
\end{figure}

If the support of some $V(D,k,m)$ is not connected, the proposition follows
easily by noting that there are two nodes in $D$
corresponding to nodes of the form $(*,0)$ or $(0,*)$ in $V(D,k,m)$ that
get hidden.
\end{proof}


\section{Proof of the main result}

The main step towards the proof of Theorem \ref{thm:monomialdegbnd} is taken
in the following lemma.  By a \emph{top horizontal face} we mean a
horizontal face of the diagram corresponding to the highest degree.

\begin{lemma} \label{indsteplemma}
Let $D$ be an $n$-dimensional, $n \geq 4$, diagram of size $d$
with a unique source at the origin.
Then there exist
integers $k, m$ ($k \not= m$, and
$1 \leq k,m \leq n$) such that
\begin{equation} \label{eq:stepbound}
\#\bigl(V(D,k,m)\bigr) \leq \#(D) - d .
\end{equation}
Furthermore, if any complete set of simple diagrams corresponding to
any edge
contains a facial node,
or if any top horizontal face has more than three nodes, then
we can choose $k$ and $m$ such that
\begin{equation} \label{eq:stepbound2}
\#\bigl(V(D,k,m)\bigr) < \#(D) - d .
\end{equation}
\end{lemma}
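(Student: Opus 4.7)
The plan is to fix an ordered pair $(k, m)$ and form a complete set of simple diagrams $\sF_{k,m} = (F_1, \ldots, F_s)$ corresponding to the edge $(x_k, x_m)$. Recall the heights $h_1, \ldots, h_s$ satisfy $\sum_i h_i = d$, and by Lemma \ref{dim2countlemma} we have $2f_i + e_i + c_i \geq h_i + 1$ for each $i$, where $f_i$, $e_i$, $c_i$ count facial, edge, and top-corner nodes of $F_i$. Since distinct $F_i$ correspond to disjoint ranges of degrees, their nodes correspond to distinct nodes of $D$ in the respective vertical faces.

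The heart of the argument is to analyze which nodes of $D$ in these faces become visible in $V(D, k, m)$ and which are hidden. The view collapses each vertical face along the diagonals $\alpha_k + \alpha_m = \text{const}$, retaining only the point with smallest $\alpha_k$ on each such diagonal. I plan to show that each $F_i$ contributes at least $h_i$ hidden nodes, so that summing over $i$ yields a total of at least $d$ hidden nodes, giving $\#(V(D,k,m)) \leq \#(D) - d$. Facial nodes of $F_i$ should be the primary contributors since they lie in the interior of a $2$-dimensional face, while the edge and corner nodes along the small-$\alpha_k$ side of $F_i$ will be controlled by Proposition \ref{overandvertprop}, which forbids overhang and outside vertical edge nodes.

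For the strict inequality in the second statement: if some $\sF_{k,m}$ contains a facial node (i.e., $f_i \geq 1$ for some $i$), the double-counting slack in Lemma \ref{dim2countlemma} should provide one additional hidden node beyond the base count of $h_i$. If some top horizontal face of $D$ has more than three nodes, I plan to adapt the argument of Proposition \ref{prop:horizhid} (which is for $2$-dimensional symmetric diagrams) to the $(n-1)$-dimensional symmetric diagram that is the top horizontal face of $D$, obtaining at least one additional hidden node in the topmost row of an appropriately chosen view.

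The main obstacle I anticipate is the precise accounting of which $2$-dimensional nodes in $F_i$ correspond to hidden nodes of $D$. In particular, verifying that a facial node $(a, b)$ of $F_i$ is genuinely hidden in the view requires that $(a - 1, b + 1)$ (in the same face) is nonzero, which does not follow directly from the facial condition; this should be obtainable by combining the structural property of simple diagrams (bottom nodes only in the lowest nonzero row) with the no-overhang hypothesis. A secondary difficulty is handling edge and corner nodes that lie on boundaries between adjacent $F_i$ and $F_{i+1}$ in the complete set, to avoid double-counting. Finally, the choice of $(k, m)$ may require case analysis, either via an averaging argument over the $n(n-1)$ ordered pairs (using $n \geq 4$ to guarantee flexibility) or by a direct structural selection based on the distribution of nodes among the faces of $D$.
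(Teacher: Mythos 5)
Your framework — fix an edge, decompose its side into a complete set of simple diagrams, apply Lemma~\ref{dim2countlemma} to each $F_i$, and show that facial and side-edge/corner nodes are hidden via Proposition~\ref{overandvertprop} — is the same as the paper's. But your plan for handling facial nodes is backwards, and this is the central difficulty of the lemma. You write that ``the double-counting slack in Lemma~\ref{dim2countlemma} should provide one additional hidden node'' when $f_i \ge 1$. In fact there is a \emph{deficit}, not a surplus: the bound is $2f_i + e_i + c_i \ge h_i + 1$, while the number of hidden nodes contributed by $F_i$ is at most $f_i + e_i + c_i - 1 \ge (h_i + 1 - f_i) - 1 = h_i - f_i$. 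Each facial node weights twice in the inequality but contributes only one hidden node, so on $F_i$ alone you are short by $f_i$ nodes, and summing over $i$ gives a total shortfall of $\sum_i f_i$ relative to the target $d$.

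The paper closes this gap by two devices you do not identify. First, the edge $(x_k,x_m)$ is not arbitrary: it is chosen so that its complete set $\sF$ has the \emph{minimal} number of facial nodes among all edges. Second — and this is where $n\ge4$ is used — there are at least two other edges incident to the vertex $x_m$, with complete sets $\sF'$ and $\sF''$. By minimality each has at least $\sum_i f_i$ facial nodes, and all of them lie in $2$-planes transverse to the view direction from $x_k$ to $x_m$, so they are all hidden in $V(D,k,m)$. Thus each facial node of $\sF$ accounts for three hidden nodes (itself and one each in $\sF'$ and $\sF''$), yielding at least $3f_i + e_i + c_i - 1 \ge h_i + f_i$ hidden nodes per $F_i$. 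This repairs the deficit and simultaneously gives the strict inequality whenever any $f_i > 0$. Without this pairing argument your averaging idea will not produce inequality \eqref{eq:stepbound} once a facial node is present. A second smaller slip: the top horizontal face is by definition a $2$-dimensional object (it lies in a $2$-plane defined by $|\alpha|=d$ and $n-3$ equations $\alpha_j = c_j$), so Proposition~\ref{prop:horizhid} applies to it directly; there is no need to generalize it to an $(n-1)$-dimensional diagram.
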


\begin{proof}
For each edge we look at all possible corresponding complete sets of simple diagrams, and count the number of facial nodes in these complete sets. We select an edge $(x_k, x_m)$ that has a corresponding complete set of simple diagrams $\sF$ with the minimal number of facial nodes.

In the view $V(D,k,m)$, we note that no facial node on $\sF$
corresponds to a node in $V(D,k,m)$.  Similarly, no edge node corresponds
to a node in $V(D,k,m)$ since only outside vertical edge nodes could possibly be
seen in $V(D,k,m)$ and no such nodes exist by
Proposition~\ref{overandvertprop}.
Let $F_1,\ldots,F_s$ be the simple diagrams contained in $\sF$ as described
above.  If $d_j$ is the height of $F_j$ then
$\sum d_j = d$.  A corner node in $F_j$ that is not a top-corner of maximal
degree (maximal degree in $F_j$) is not visible in $V(D,k,m)$.
Were such a node visible in $V(D,k,m)$, it would be an outside
vertical edge node and $V(D,k,m)$ has no such nodes by
Proposition~\ref{overandvertprop}.  Of the top-corner
nodes of maximal degree, we can ``see'' exactly one of them (the one
corresponding to the $x_k$ corner), hence one top-corner node is visible.

Let $e_j$, $c_j$, and $f_j$ denote the edge, corner and facial nodes in $F_j$
(not counting any bottom corner nodes).
First suppose that there are no facial nodes in any
complete set of simple diagrams corresponding to any edge.
Then by Lemma~\ref{dim2countlemma}, we see that there must be
at least $e_j + c_j \geq d_j+1$ nodes in $F_j$ and at most one of them is
visible.  Thus
\begin{equation}
\sum_{j=1}^s (e_j + c_j)
\geq
\sum_{j=1}^s (d_j + 1)
=
d+s .
\end{equation}
Hence there are at least $d+s$ nodes in $\sF$ and of those only $s$
are visible in $V(D,k,m)$.  Thus $d$ nodes in $D$ do not correspond
to any node in $V(D,k,m)$ and \eqref{eq:stepbound} holds.

Still assume that there are no facial nodes in any
complete set of simple diagrams corresponding to any edge.
Suppose that there is some top horizontal face with more than
3 nodes.  Without loss of generality let this be the horizontal face (or
faces)  corresponding to the vertices $\{ x_1 , x_2, x_3 \}$.
Then there is at least one view
$V(D,1,2)$, $V(D,1,3)$ or $V(D,2,3)$ that must hide at least two nodes by
Proposition~\ref{prop:horizhid}.
But in the above estimate we only
counted one hidden node in the top degree.  Hence for that view, at least
$d+1$ nodes are hidden and so \eqref{eq:stepbound2} holds.

Now suppose that there are facial nodes in $\sF$. There are at least
two other edges with one vertex being $x_m$.  Let $\sF'$ and $\sF''$
be the corresponding complete sets of simple diagrams.
As $\sF$ had the least number of facial nodes,
then $\sF'$ and $\sF''$ have at least as many facial nodes as $\sF$.

We observe that
as the view goes in the direction from $x_k$ to $x_m$, any facial nodes in $\sF'$
and $\sF''$ do not correspond to nodes in $V(D,k,m)$.  Hence
for each facial node in $\sF$, two other facial nodes are hidden, one in $\sF'$
and one in $\sF''$.  Therefore, by the above calculation, for each $F_j$
there are at least
\begin{equation}
3 f_j+ e_j + c_j - 1
\end{equation}
nodes hidden in $V(D,k,m)$.  As $2 f_j+ e_j + c_j \geq d+1$, then
if $f_j > 0$ for any $j$, more than $d$ nodes are hidden in $V(D,k,m)$
and \eqref{eq:stepbound2} holds.
\end{proof}

Lemma \ref{indsteplemma} is the induction step in the proof of Theorem
\ref{thm:monomialdegbnd}. The basis for this induction is the following
result, which was proved by the authors in~\cite{LP}.

\begin{thm}[see \cite{LP}] \label{thm:diagbound3}
Let $D$ be a $3$-dimensional diagram of size $d$ with a unique source at
the origin.  Then
\begin{equation}
2d+2 \leq \#(D).
\end{equation}
\end{thm}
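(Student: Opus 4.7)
My plan is to follow the spirit of Lemma \ref{indsteplemma} but adapted to the base case $n=3$, where a dimension-reduction induction is not yet available. Specifically, I would pick a well-chosen pair $(k,m)$, form the view $V = V(D,k,m)$ (a 2-dimensional diagram of size $d$ with a unique source at the origin, by the analog of Proposition \ref{overandvertprop}), and try to establish the two inequalities
\begin{equation*}
\#(V) \geq d + 1 \qquad \text{and} \qquad \#(D) - \#(V) \geq d + 1 .
\end{equation*}
Summing these gives $\#(D) \geq 2d+2$.

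For the hidden-node count $\#(D) - \#(V) \geq d+1$, the plan is to imitate the proof of Lemma \ref{indsteplemma}: fix a complete set of simple diagrams $\sF = (F_1,\ldots,F_s)$ for the side $(x_k,x_m)$ and apply Lemma \ref{dim2countlemma} to each $F_j$, obtaining $2f_j + e_j + c_j \geq d_j+1$. As in Lemma \ref{indsteplemma}, Proposition \ref{overandvertprop} rules out visibility of all non-outside edge and facial nodes of $\sF$ in $V(D,k,m)$, leaving only one top-corner per $F_j$ visible; this already hides $d$ nodes. To extract the extra hidden node, I would look at the top horizontal face of $D$: this face is naturally a 2-dimensional symmetric diagram, and invoking Proposition \ref{prop:horizhid} on it shows that, provided the top face is not the trivial case of a single nonzero point, some view $V(D,k,m)$ hides at least one additional top-face node beyond what is already counted. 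The choice of $(k,m)$ is then made so as to simultaneously exploit the side count and the top-face reduction.

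For the lower bound $\#(V) \geq d+1$, the diagram $V$ is 2-dimensional of size $d$, unique source at origin, with no overhang and no outside vertical edge nodes. I would apply Lemma \ref{dim2countlemma} to $V$ viewed as a simple 2-dimensional diagram of height $d$ (with the source at the origin as the distinguished bottom node), together with a careful analysis of sign changes along the boundary of the support, to produce $\#(V) \geq d+1$. An alternative route is to realize $V$ as the $x_3=-1$ slice of a 2-dimensional symmetric diagram and chain together the inequality in Proposition \ref{prop:horizhid} with the base case $\#(D_{sym}) = 3$.

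The main obstacle, and what I expect to require the most care, is the degenerate situation in which the top horizontal face of $D$ is a single nonzero point (so that Proposition \ref{prop:horizhid} yields no extra hidden node from the top) together with the case $s=1$ in $\sF$. In this situation the extra hidden node must instead come from the lateral "staircase" structure of $D$ forced by the unique source at the origin and the no-overhang property of Proposition \ref{overandvertprop}. A separate direct combinatorial argument, classifying the possible supports of $q$ compatible with these constraints, should be needed to close this case and complete the proof.
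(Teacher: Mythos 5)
This theorem is not proved in the present paper at all; it is quoted verbatim from \cite{LP}, where (as the authors remark right before the definition of views) the argument for $n=3$ proceeds by counting nodes on \emph{all} two-dimensional faces of the diagram and carefully summing, with facial, edge, and corner nodes weighted according to how many faces they lie on. Your proposal instead tries to run the $n\geq 4$ dimension-reduction (Lemma~\ref{indsteplemma}) one step further down, splitting $\#(D)\geq 2d+2$ as $\#(V)\geq d+1$ plus $\#(D)-\#(V)\geq d+1$. This is a genuinely different route from \cite{LP}, but it cannot work.

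The fatal step is the claim $\#(V)\geq d+1$ for the two-dimensional view $V$. A two-dimensional diagram of size $d$ with a unique source at the origin satisfies only a bound of order $d/2$, not $d$: the sharpness of $d\leq 2N-3$ in two dimensions produces polynomials with $N=(d+3)/2$ terms, hence $\#(D)\leq N+1=(d+5)/2$. Concretely, for D'Angelo's polynomial $p_2=x^5+5x^3y+5xy^2+y^5$ one computes $q_2=(p_2-1)/(x+y-1)$ and finds a two-dimensional diagram of size $d=5$ with exactly five nodes, namely the source at $(0,0)$ and sinks at $(5,0)$, $(0,5)$, $(1,2)$, $(3,1)$, so $\#=5<6=d+1$. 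This diagram does arise as a view: set $p_3(x,y,z)=p_2(x+z,y)$, which lies in $\sH$ with a three-dimensional diagram $D_3$ having a unique source at the origin, and then $V(D_3,1,3)$ recovers precisely the two-dimensional diagram of $q_2$. So your first inequality is false for actual views, and the additive split collapses. A secondary issue is that the facial-node case of Lemma~\ref{indsteplemma} uses the existence of two \emph{other} edges sharing the vertex $x_m$, which fails when $n=3$ (there is only one). In short, the $n=3$ base case really does require the separate face-summing argument of \cite{LP}; it is not a degenerate instance of the induction step.
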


\begin{thm} \label{thm:diagbound}
Let $D$ be an $n$-dimensional, $n \geq 4$, diagram of size $d$
with a unique source at the origin.  Then
\begin{equation} \label{eq:diagbound}
(n-1)d+2 \leq \#(D) .
\end{equation}
\end{thm}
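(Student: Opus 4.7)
The plan is to prove Theorem~\ref{thm:diagbound} by induction on $n$, using Lemma~\ref{indsteplemma} to pass from dimension $n$ to dimension $n-1$, with Theorem~\ref{thm:diagbound3} serving as the base step $n=4$.

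First I would record two observations about the view $V(D,k,m)$ of an $n$-dimensional diagram $D$ of size $d$ that has a unique source at the origin. Since the map $\gamma_{k,m}(\alpha,a,b)$ preserves the total coordinate sum $|\alpha|$, the maximal coordinate sum achieved at a nonzero point of $V(D,k,m)$ equals that of $D$, and the origin of $V(D,k,m)$ records $D(0,\ldots,0)=P$ and is therefore nonzero; hence $V(D,k,m)$ is an $(n-1)$-dimensional diagram of size $d$. By the proposition stated just after the definition of the view, $V(D,k,m)$ also has a unique source at the origin with all other nodes being sinks, so it satisfies the hypotheses needed to feed into the induction.

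Granted these observations, the induction itself is routine. For the base case $n=4$, Lemma~\ref{indsteplemma} furnishes integers $k,m$ with $\#(V(D,k,m)) \leq \#(D) - d$, and $V(D,k,m)$ is a $3$-dimensional diagram of size $d$ with a unique source at the origin, so Theorem~\ref{thm:diagbound3} gives $\#(V(D,k,m)) \geq 2d+2$; combining the two inequalities yields $\#(D) \geq 3d+2 = (n-1)d+2$. For the inductive step $n \geq 5$, assume the bound holds in dimension $n-1$. The same application of Lemma~\ref{indsteplemma}, together with the induction hypothesis applied to $V(D,k,m)$ (which gives $\#(V(D,k,m)) \geq (n-2)d+2$), produces $\#(D) \geq (n-1)d+2$, completing the induction.

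There is no substantive obstacle; all the real work has already been done in Lemma~\ref{indsteplemma} and in the three-dimensional base case proved in~\cite{LP}. The only detail worth double-checking in writing up the argument is the claim that the size of $V(D,k,m)$ remains $d$, which is immediate from the degree-preserving property of $\gamma_{k,m}$ together with the fact that the origin is preserved as a nonzero (and indeed source) point.
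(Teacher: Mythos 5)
Your proof is correct and takes essentially the same route as the paper: induction on $n$ via Lemma~\ref{indsteplemma}, with the $3$-dimensional result (Theorem~\ref{thm:diagbound3}) as the base. The paper leaves implicit the two facts you spell out — that the view has the same size $d$ and retains a unique source at the origin — but otherwise the argument is identical.
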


\begin{proof}
The proof follows by induction on $n$. We know the result holds for $n = 3$
by Theorem \ref{thm:diagbound3}.  Let $n \geq 4$ and suppose that the
result holds for dimension $n-1$.  By Lemma~\ref{indsteplemma} we can find a view $V$ with
$\#(V) \leq \#(D) - d$. As $V$ is $(n-1)$-dimensional we have that
\begin{equation}
(n-2)d + 2 \leq \#(V)
\end{equation}
and inequality \eqref{eq:diagbound} follows.
\end{proof}

\begin{thm} \label{thm:diagwhitney}
Let $D$ be an $n$-dimensional, $n \geq 4$, diagram of size $d$
with a unique source at the origin.  Suppose that $D$
minimizes
$\#(D)$ for the given degree $d$, that is
$(n-1)d+2 = \#(D)$.  Then $D$ contains precisely one point in each
degree.
\end{thm}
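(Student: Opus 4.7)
The plan is to argue by induction on $n \ge 4$.

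\emph{Setup common to all $n \ge 4$.} Combining the hypothesis $\#(D) = (n-1)d+2$ with Lemma~\ref{indsteplemma} and the lower bound $\#\bigl(V(D,k,m)\bigr) \ge (n-2)d + 2$ from Theorem~\ref{thm:diagbound} (applicable since $n-1 \ge 3$), the view chosen by Lemma~\ref{indsteplemma} satisfies $\#\bigl(V(D,k,m)\bigr) = (n-2)d + 2$, so it is itself a minimizer in dimension $n-1$. The strict form of Lemma~\ref{indsteplemma} must then fail, which gives (a) no complete set of simple diagrams of any edge of $D$ contains a facial node, and (b) every top horizontal face of $D$ has at most three nodes. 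Tracking the equality case through the proof of Lemma~\ref{indsteplemma} (and through Lemma~\ref{dim2countlemma}) further forces every simple diagram $F_j$ of height $d_j$ in every complete set to have exactly $d_j + 1$ nodes, all of them edge or corner. Because (a) applies to every edge, the counting argument of Lemma~\ref{indsteplemma} works uniformly on every edge, so in fact \emph{every} view $V(D,k,m)$ of $D$ is a minimizer in dimension $n-1$.

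\emph{Inductive step, $n \ge 5$.} The inductive hypothesis applied to each view $V(D,k,m)$ asserts that every such view has precisely one nonzero point at each degree $\ell \in \{0,1,\ldots,d-1\}$. This pins down, for each pair $(k,m)$ and each degree $\ell$, the values of all coordinates other than $x_k$ and $x_m$ of any nonzero degree-$\ell$ point of $D$. Since the pairs $(k,m)$ collectively cover every coordinate, any two nonzero points of $D$ at the same degree must agree in every slot and therefore coincide. The origin realizes degree $0$ and some top-corner realizes degree $d-1$ (as $D$ has size $d$); a short argument using (a) then rules out any gaps, so every intermediate degree is realized as well.

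\emph{Base case $n = 4$.} Here each view is only guaranteed to be a $3$-dimensional minimizer, and $3$-dimensional minimizers need not have one point per degree (example~\eqref{ex:Faran3D}), so the induction does not start of its own accord. I would argue instead directly from (a), (b), and the tight simple-diagram count. Supposing for contradiction that $D$ carries two distinct nonzero points $\beta, \beta'$ at a common degree, I would trace both points through the simple diagrams on the six sides of $D$ and show that either a facial node must appear in some complete set (contradicting (a)) or some top horizontal face must contain at least four nodes (contradicting (b)). This case analysis is the main obstacle of the proof and I expect it to be its technical heart.
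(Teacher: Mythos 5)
Your proposed induction on $n$ is a genuinely different route from the paper's, and it contains a real gap: the base case $n=4$ is never actually carried out.

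Your inductive step for $n\geq 5$ is sound as far as it goes. You correctly extract from $(n-1)d+2=\#(D)$, Lemma~\ref{indsteplemma}, and Theorem~\ref{thm:diagbound} that no complete set of simple diagrams has a facial node and that every top horizontal face has at most three nodes, whence \emph{every} view $V(D,k,m)$ satisfies $\#(V(D,k,m))=(n-2)d+2$ (one should also note that each view still has size $d$ and a unique source at the origin, which follow from the origin and a top-degree point surviving the projection and from the proposition on views of unique-source diagrams). Since two distinct degree-$\ell$ points of $D$ differ in at least two coordinates, and for $n\geq4$ one can always choose a view that retains those two coordinates, the one-point-per-degree conclusion in dimension $n-1$ does force it in dimension $n$. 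But this reduction bottoms out at $n=4$, whose views are only $3$-dimensional minimizers, and — as you observe with example~\eqref{ex:Faran3D} — the theorem is simply false there, so the induction cannot start on its own. For $n=4$ you offer only a plan (``trace both points through the six sides and derive a contradiction with (a) or (b)''), explicitly flagged as the technical heart still to be done. That is precisely the part that needs a proof, and the proposal does not supply one.

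The paper avoids the base-case problem entirely by inducting on the degree $d$ rather than the dimension: from $\#(D)=(n-1)d+2$ it first shows (via Lemma~\ref{indsteplemma} and the bounds) that every top horizontal face must consist of a single point, so the top-degree part is a disjoint union of isolated points; then it observes that deleting any one such point removes exactly $n$ sinks and creates at most one new node, hence yields a unique-source diagram $D'$ with $\#(D')=\#(D)-(n-1)=(n-1)(d-1)+2$, which by minimality forces $D'$ to have degree $d-1$; therefore $D$ had only one top-degree point, and induction on $d$ finishes the argument uniformly for all $n\geq4$. This ``peel off a top point'' step — quantitatively tracking how $\#(D)$ drops — is the key idea your proposal is missing. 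If you want to salvage the induction-on-$n$ scheme, you would in effect need to reprove this peeling argument for $n=4$; at that point you might as well use it for all $n$ as the paper does.
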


It is not hard to see that if a diagram $D$ contains precisely one point in each
degree such that for each degree precisely one of the possible nodes is
cancelled by the point in the higher degree (a diagram corresponding
to a sharp generalized Whitney polynomial), then we see that
$\#(D) = (n-1)d+2$.

\begin{proof}
If there were more than $3$ nodes on any top horizontal face,
then we could apply
Lemma~\ref{indsteplemma} to find a view $V$ with
$\#(V) < (n-1)d+2 - d = (n-2)d+2$, which would contradict the
bound of Theorems
\ref{thm:diagbound3} (if $n = 4$) and \ref{thm:diagbound} (if $n > 4$).

Thus all top horizontal faces have 3 nodes.  By the 2-dimensional bounds,
this means that the top horizontal faces must all consist of a single point,
therefore the top degree part of $D$ consists of isolated
points.  By ``isolated'' we mean that the corresponding horizontal faces (each
consisting of a single point) are disjoint.
It is not hard to see that removing any of
those points would get a diagram $D'$ with $n-1$ fewer nodes (only one
node can be created and $n$ nodes would be removed).  The diagram $D'$
would still have a unique source at the origin.  As $D$
minimizes $\#(D)$ for the given degree, then the degree of $D'$ must be one
less.  That is, there could have been at most one point in the top degree part
of $D$.  The proof follows by induction on the degree.
\end{proof}

As we have mentioned above, Proposition~\ref{prop:ptodiag}, a term in
$p \in \sH$ corresponds to a sink in the corresponding diagram, and in fact
$\#(D)-1 \leq N(p)$.  Theorem~\ref{thm:diagbound} thus proves the
main estimate in Theorem~\ref{thm:monomialdegbnd}.
Theorem~\ref{thm:diagwhitney} implies that
a $p \in \sH$ that satisfies equality in the bound, must be such
that $q = \frac{p-1}{s-1}$ has exactly one nonzero term of each degree.  It
follows immediately that such a $p$ must be a sharp generalized Whitney polynomial,
proving Theorem~\ref{thm:equality}.

\begin{remark}
We conjecture that the same degree bound holds for all $n$
without requiring that $p$
has positive coefficients, under the weaker
indecomposability condition discussed in \cite{LP}.  This conjecture was proved
for $n=2$ in \cite{LP}.  In the above the key assumptions
that make the proof work are no overhang, which was required in \cite{LP}
to prove the bound for $n=3$, and in the present proof we required no
outside vertical edge nodes for $n \geq 4$.  Both assumptions are
automatically satisfied for a diagram with a unique source at the origin.
We note that apart from these assumptions we did not differentiate between sinks and sources.
\end{remark}


\def\MR#1{\relax\ifhmode\unskip\spacefactor3000 \space\fi%
  \href{http://www.ams.org/mathscinet-getitem?mr=#1}{MR#1}}

\begin{bibdiv}
\begin{biblist}

\bib{D88}{article}{
   author={D'Angelo, John P.},
   title={Polynomial proper maps between balls},
   journal={Duke Math.\ J.},
   volume={57},
   date={1988},
   number={1},
   pages={211--219},
   issn={0012-7094},
   review={\MR{952233}},
   doi={10.1215/S0012-7094-88-05710-9},
}

\bib{Dbook}{book}{
   author={D'Angelo, John P.},
   title={Several complex variables and the geometry of real hypersurfaces},
   series={Studies in Advanced Mathematics},
   publisher={CRC Press},
   place={Boca Raton, FL},
   date={1993},
   pages={xiv+272},
   isbn={0-8493-8272-6},
   review={\MR{1224231}},
}

\bib{DKR}{article}{
   author={D'Angelo, John P.},
   author={Kos, {\v{S}}imon},
   author={Riehl, Emily},
   title={A sharp bound for the degree of proper monomial mappings between
   balls},
   journal={J. Geom. Anal.},
   volume={13},
   date={2003},
   number={4},
   pages={581--593},
   issn={1050-6926},
   review={\MR{2005154}},
}

\bib{DL:families}{article}{
   author={D'Angelo, John P.},
   author={Lebl, Ji{\v{r}}{\'{\i}}},
   title={On the complexity of proper holomorphic mappings between balls},
   journal={Complex Var.\ Elliptic Equ.},
   volume={54},
   date={2009},
   number={3-4},
   pages={187--204},
   issn={1747-6933},
   review={\MR{2513534}},
   doi={10.1080/17476930902759403},
}

\bib{DLP}{article}{
   author={D'Angelo, John P.},
   author={Lebl, Ji{\v{r}}{\'{\i}}},
   author={Peters, Han},
   title={Degree estimates for polynomials constant on a hyperplane},
   journal={Michigan Math. J.},
   volume={55},
   date={2007},
   number={3},
   pages={693--713},
   issn={0026-2285},
   review={\MR{2372622}},
}

\bib{Faran:B2B3}{article}{
   author={Faran, James J.},
   title={Maps from the two-ball to the three-ball},
   journal={Invent. Math.},
   volume={68},
   date={1982},
   number={3},
   pages={441--475},
   issn={0020-9910},
   review={\MR{669425}},
}

\bib{Forstneric}{article}{
   author={Forstneri{\v{c}}, Franc},
   title={Extending proper holomorphic mappings of positive codimension},
   journal={Invent. Math.},
   volume={95},
   date={1989},
   number={1},
   pages={31--61},
   issn={0020-9910},
   review={\MR{969413}},
}

\bib{HJ01}{article}{
   author={Huang, Xiaojun},
   author={Ji, Shanyu},
   title={Mapping ${\mathbf B}^n$ into ${\mathbf B}^{2n-1}$},
   journal={Invent. Math.},
   volume={145},
   date={2001},
   number={2},
   pages={219--250},
   issn={0020-9910},
   review={\MR{1872546}},
   doi={10.1007/s002220100140},
}

\bib{HJX:gap}{article}{
   author={Huang, Xiaojun},
   author={Ji, Shanyu},
   author={Xu, Dekang},
   title={A new gap phenomenon for proper holomorphic mappings from $B^n$
   into $B^N$},
   journal={Math.\ Res.\ Lett.},
   volume={13},
   date={2006},
   number={4},
   pages={515--529},
   issn={1073-2780},
   review={\MR{2250487}},
}

\bib{Lebl:deg2}{article}{
      author={Lebl, Ji{\v r}\'i},
   title={Normal forms, Hermitian operators, and CR maps of spheres and hyperquadrics},
   note={arXiv:0906.0325},
      journal={Michigan Math.\ J.},
      status={to appear}
}

\bib{LP}{article}{
   author={Lebl, Ji{\v{r}}{\'{\i}}},
   author={Peters, Han},
   title={Polynomials constant on a hyperplane and CR maps of hyperquadrics},
   journal={Mosc.\ Math.\ J.},
   volume={11},
   date={2011},
   number={2},
   pages={287--317},
   note = {preprint: \href{http://arxiv.org/abs/0910.2673}{arXiv:0910.2673}}
}

\bib{fewnomials}{book}{
   author={Khovanski{\u\i}, A. G.},
   title={Fewnomials},
   series={Translations of Mathematical Monographs},
   volume={88},
   note={Translated from the Russian by Smilka Zdravkovska},
   publisher={American Mathematical Society},
   place={Providence, RI},
   date={1991},
   pages={viii+139},
   isbn={0-8218-4547-0},
   review={\MR{1108621}},
}

\bib{Meylan}{article}{
   author={Meylan, Francine},
   title={Degree of a holomorphic map between unit balls from ${\mathbf C}^2$ to ${\mathbf C}^n$},
   journal={Proc. Amer. Math. Soc.},
   volume={134},
   date={2006},
   number={4},
   pages={1023--1030 (electronic)},
   issn={0002-9939},
   review={\MR{2196034}},
   doi={10.1090/S0002-9939-05-08476-5},
}

\end{biblist}
\end{bibdiv}

\end{document}